\numberwithin{equation}{section}
\newtheorem{theorem}{Theorem}[section]
\newtheorem{proposition}[theorem]{Proposition}
\newtheorem{corollary}[theorem]{Corollary}
\newtheorem{lemma}[theorem]{Lemma}
\theoremstyle{definition}
\newtheorem{example}[theorem]{Example}
\newtheorem{definition}[theorem]{Definition}
\theoremstyle{remark}
\title[  On Large Scale Inductive Dimension of Asymptotic Resemblance Spaces]{ On Large Scale Inductive Dimension of Asymptotic Resemblance Spaces}
\author[Sh. Kalantari]{Sh. Kalantari}
 \address[Sh. Kalantari]{Mathematics and Computer Science Department,
 Amirkabir University of Technology, 424 Hafez Avenue, 15914 Tehran,
Iran.}
 \email{shahab.kalantari@aut.ac.ir}
\author[B.~Honari]{B. Honari$^\dag$}
\address[B.~Honari]{Mathematics and Computer Science Department,
 Amirkabir University of Technology, 424 Hafez Avenue, 15914 Tehran,
Iran.}
\address[B.~Honari]{$^\dag$ Corresponding author.}
\email{honari@aut.ac.ir}
\subjclass[2010]{51F99, 53C23, 54C20, 18B30}
\keywords{asymptotic resemblance, asymptotic dimensiongrad, asymptotic inductive dimension, large scale inductive dimension}
\begin{document}
\maketitle
\begin{abstract}
We introduce the notion of \emph{large scale inductive dimension} for \emph{asymptotic resemblance spaces}. We prove that the large scale inductive dimension and the \emph{asymptotic dimensiongrad} are equal in the class of \emph{$r$-convex} metric spaces. This class contains the class of all geodesic metric spaces and all finitely generated groups. This leads to an answer to a question asked by E. Shchepin, concerning the relation between the \emph{asymptotic inductive dimension} and the asymptotic dimensiongrad, for $r$-convex metric spaces.
\end{abstract}
\section{introduction}
\emph{Asymptotic dimension} was first introduced by Gromov as a large scale invariant of metric spaces \cite{gro}. The asymptotic dimension has found many applications in the area of geometric group theory. In \cite{Ind} and \cite{Uni}, Dranishnikov and Zarichnyi gave a definition for \emph{asymptotic inductive dimension} of a proper metric space. To find a definition for a separator in large scale, they considered subsets of a proper metric space which can separate the boundary of two asymptotically disjoint subsets in the Higson  corona (section \ref{111}). For a topological space with a compatible and proper coarse structure this definition can be used without making any changes (Section \ref{111}).\\
 In \cite{maa}, motivated by the definition of \emph{proximity space} (\cite{Ef2}) in small scale, we introduced the notion of \emph{asymptotic resemblance space} as a large scale structure on a set. An asymptotic resemblance relation on a set $X$, is an equivalence relation on the family of all subsets of $X$ with two additional properties (Definition \ref{asr}). In section 3, using an inductive approach for finding the dimension of an asymptotic resemblance space, we define the \emph{large scale inductive dimension}. The large scale inductive dimension can be considered as a generalization of asymptotic inductive dimension for asymptotic resemblance spaces. The large scale inductive dimension is an asymptotic invariant of asymptotic resemblance spaces. Since a coarse structure can induce an asymptotic resemblance relation, our definition can be used for all coarse spaces. One of the advantages of our definition is that, for finding the large scale inductive dimension of a coarse space $X$ the set $X$ does not need to have a topology. In \cite{Uni} there is also the definition of \emph{asymptotic dimensiongrad}. In section 4, we prove that the large scale inductive dimension and the asymptoic dimensiongrad are equal for the class of all metric spaces with a property that we call \emph{$r$-convexity}. The class of all $r$-convex metric spaces contains the family of all geodesic metric spaces and all finitely generated groups as two important examples. This leads to a partial answer for an open question concerning the relation between the asymptotic inductive dimension and the asymptotic dimensiongrad of a metric space, asked in \cite{Uni}.
\section{preliminaries}
\subsection{Asymptotic resemblance}
We recall here some notions from \cite{maa}.
\begin{definition}\label{asr}
An \emph{asymptotic resemblance} (an AS.R.) $\lambda$ on a set $X$ is an equivalence relation on the family of all subsets of $X$ that has the following properties,\\
i) If $A_{1}\lambda B_{1}$ and $A_{2}\lambda B_{2}$ then $(A_{1}\bigcup A_{2})\lambda (B_{1}\bigcup B_{2})$.\\
ii) If $(B_{1}\bigcup B_{2})\lambda A$ and $B_{1},B_{2}\neq \emptyset$ then there are nonempty subsets $A_{1}$ and $A_{2}$ of $A$ such that $A=A_{1}\bigcup A_{2}$ and $B_{i}\lambda A_{i}$ for $i\in \{1,2\}$.\\
We call the pair $(X,\lambda)$ an AS.R. space. We call two subsets $A$ and $B$ of $X$ asymptotically alike if $A\lambda B$. By $A\bar{\lambda} B$, we mean $A$ and $B$ are not asymptotically alike.
\end{definition}
\begin{proposition} (Proposition 2.6 of \cite{maa})\label{6setare}
Let $(X,\lambda)$ be an AS.R. space. If $A\lambda B$ and $A_{1}$ is a nonempty subset of $A$ then there is a nonmepty subset $B_{1}$ of $B$ such that $A_{1}\lambda B_{1}$.
\end{proposition}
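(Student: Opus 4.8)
The plan is to reduce the statement to property (ii) of Definition \ref{asr} by splitting $A$ along the given subset $A_1$. First I would dispose of the degenerate case $A_1 = A$: here one simply takes $B_1 = B$ and invokes the hypothesis $A\lambda B$ directly, so there is nothing to prove.

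For the remaining case, where $A_1$ is a proper nonempty subset of $A$, the key observation is that the complement $A_2 := A\setminus A_1$ is then nonempty, so that $A = A_1 \cup A_2$ exhibits $A$ as a union of two nonempty sets. Rewriting the hypothesis as $(A_1 \cup A_2)\lambda B$ puts it into exactly the form required by property (ii). Applying that property, with $B$ playing the role of the set to be decomposed, yields nonempty subsets $B_1, B_2 \subseteq B$ with $B = B_1 \cup B_2$, together with $A_1 \lambda B_1$ and $A_2 \lambda B_2$. The set $B_1$ is then the desired nonempty subset of $B$ that is asymptotically alike to $A_1$, which closes the argument.

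I do not expect a genuine obstacle here: the entire content of the proof is the single application of the splitting axiom (ii), and the one point demanding care is the case distinction. One must separate off $A_1 = A$, because in that case the complement $A\setminus A_1$ is empty, and property (ii)—which requires both pieces of the union to be nonempty—cannot be applied. Beyond the equivalence-relation framework already built into Definition \ref{asr}, no appeal to property (i) or to the symmetry of $\lambda$ is needed.
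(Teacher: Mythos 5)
Your proof is correct, and it is the natural argument: the paper itself gives no proof of this statement (it is recalled from Proposition 2.6 of \cite{maa}), but the reduction to axiom (ii) of Definition \ref{asr} via the decomposition $A = A_1 \cup (A\setminus A_1)$, with the case $A_1 = A$ split off because axiom (ii) needs both pieces nonempty, is exactly the argument the statement calls for. One small point deserving a word: in the case $A_1 = A$ you return $B_1 = B$, and the conclusion requires $B_1$ to be nonempty; this does hold, since a nonempty set can never be asymptotically alike to $\emptyset$ in an AS.R.\ space (otherwise writing it as the union of two copies of itself and applying axiom (ii) would yield a nonempty subset of $\emptyset$), but as written your degenerate case silently assumes it.
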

\begin{example}
Let $(X,d)$ be a metric space. For two subsets $A$ and $B$ of $X$, define $A\lambda_{d}B$ if $d_{H}(A,B)<\infty$. By $d_{H}(A,B)$ we mean the Hausdorff distance between $A$ and $B$. The relation $\lambda_{d}$ is an AS.R on $X$, which we call the asymptotic resemblance associated to the metric $d$.
\end{example}
Let us recall that a coarse structure $\mathcal{E}$ on a set $X$, is a family of subsets of $X\times X$ such that $\mathcal{E}$ contains any subset of its members and $E\bigcup F$, $E\circ F$ and $E^{-1}$ are in $\mathcal{E}$, for each $E,F\in \mathcal{E}$. For two subset $E$ and $F$ of $X\times X$, $E\circ F=\{(x,y)\mid (x,z)\in F,(z,y)\in E\,for\,some\,z\in X\}$ and $E^{-1}=\{(x,y)\mid (y,x)\in E\}$.
\begin{example}
Let $(X,\mathcal{E})$ be a coarse space. For two subsets $A$ and $B$ of $X$, define $A\lambda_{\mathcal{E}}B$ if there is an $E\in \mathcal{E}$ such that $A\subseteq E(B)$ and $B\subseteq E(A)$. The relation $\lambda_{\mathcal{E}}$ is an AS.R. on $X$. This is called the asymptotic resemblance associated to the coarse structure $\mathcal{E}$.
\end{example}
\begin{definition}
Let $\lambda$ be an AS.R. on a set $X$ and $A\subseteq X$. We say $A$ is \emph{bounded} if there exists some $x\in X$ such that $A\lambda x$. We notice that the empty set is bounded.
\end{definition}
\begin{definition}\label{map}
Let $(X,\lambda_{1})$ and $(Y,\lambda_{2})$ be two AS.R. spaces. We say a map $f:X\rightarrow Y$ is an \emph{AS.R. mapping} if\\
i) If $B\subseteq Y$ is bounded then $f^{-1}(B)$ is a bounded subset of $X$. (Properness)\\
ii) For all subsets $A$ and $B$ of $X$, If $A\lambda_{1}B$ then $f(A)\lambda_{2}f(B)$.
\end{definition}
Let $(X,d)$ and $(Y,d^{\prime})$ be two metric spaces. A map $f:X\rightarrow Y$ is a called a coarse map if for each $r>0$ there exist some $s>0$ such that $d(x,y)<r$ implies $d^{\prime}(f(x),f(y))<s$. A map $f:X\rightarrow Y$ is a coarse map if and only if it is an asymptotic mapping with respect to $\lambda_{d}$ and $\lambda_{d^{\prime}}$ (\cite{maa} Theorem 2.3).
\begin{definition}
Let $X$ and $Y$ be two sets and let $\lambda$ be an AS.R. on $Y$. We say that two maps $f:X\rightarrow Y$ and $g:X\rightarrow Y$ are \emph{close} if $f(A)\lambda g(A)$ for all $A\subseteq X$.
\end{definition}
\begin{definition}
Assume that $(X,\lambda_{1})$ and $(Y,\lambda_{2})$ are two AS.R. spaces. We call an AS.R. mapping $f:X\rightarrow Y$ an \emph{asymptotic equivalence} if there exists an AS.R. mapping $g:Y\rightarrow X$ such that $gof$ and $fog$ are close to the identity maps $i_{X}:X\rightarrow X$ and $i_{Y}:Y\rightarrow Y$ respectively. We say AS.R. spaces $(X,\lambda_{1})$ and $(Y,\lambda_{2})$ are \emph{asymptotically equivalent} if there exists an asymptotic equivalence $f:X\rightarrow Y$.
\end{definition}
Let $(X,d)$ and $(Y,d)$ be two metric spaces. It can be proved that $(X,\lambda_{d})$ and $(Y,\lambda_{d^{\prime}})$ are asymptotically equivalent if and only if $(X,d)$ and $(Y,d^{\prime})$ are coarsely equivalent (\cite{maa}).
\begin{definition}
Let $(X,\lambda)$ be an AS.R. space and let $Y$ be a nonempty subset of $X$. For two subsets $A$ and $B$ of $Y$, define $A\lambda_{Y}B$ if $A\lambda B$. The pair $(Y,\lambda_{Y})$ is an AS.R. space and we call $\lambda_{Y}$ the \emph{subspace} AS.R. induced by $\lambda$ on $Y$.
\end{definition}
\begin{proposition}\label{sub}(\cite{maa} \emph{Lemma 2.20})
Let $(X,\lambda)$ and $(Y,\lambda^{\prime})$ be two AS.R. spaces. Suppose that $f:X\rightarrow Y$ is an asymptotic equivalent and $C\subseteq X$. Then $f\mid_{C}:(C,\lambda_{C})\rightarrow (f(C),\lambda^{\prime}_{f(C)})$ is an asymptotic equivalent too.
\end{proposition}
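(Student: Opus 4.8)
The plan is to verify the two ingredients of an asymptotic equivalence for $f\mid_{C}$: that it is an AS.R. mapping and that it admits a quasi-inverse. That $f\mid_{C}\colon (C,\lambda_{C})\to (f(C),\lambda^{\prime}_{f(C)})$ is itself an AS.R. mapping is inherited directly from $f$. For properness, if $A\subseteq C$ is bounded in $(C,\lambda_{C})$, say $A\lambda x$ with $x\in C$, then $f(A)\lambda^{\prime}f(x)$, so $f(A)$ is bounded in $f(C)$ with witness $f(x)\in f(C)$; since a subset of a bounded set is bounded (which follows from Proposition \ref{6setare}), this handles the case at hand. Preservation of alikeness is immediate: if $A\lambda_{C}B$ with $A,B\subseteq C$ then $A\lambda B$, hence $f(A)\lambda^{\prime}f(B)$, and as $f(A),f(B)\subseteq f(C)$ we obtain $f(A)\lambda^{\prime}_{f(C)}f(B)$.

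For the quasi-inverse, let $g\colon Y\to X$ be the AS.R. mapping witnessing that $f$ is an asymptotic equivalence, so that $g\circ f$ is close to $i_{X}$. The natural candidate to invert $f\mid_{C}$ is \emph{not} $g$ restricted to $f(C)$, since $g(f(C))$ need not lie inside $C$. Instead I would choose a section: for each $y\in f(C)$ pick a point $h(y)\in C$ with $f(h(y))=y$, giving a map $h\colon f(C)\to C$ with $f\circ h=\id_{f(C)}$. This already yields one of the two closeness conditions for free, because $f(h(B))=B$ for every $B\subseteq f(C)$, so $f\mid_{C}\circ h$ equals, hence is close to, $i_{f(C)}$.

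The substance of the argument is to show that this arbitrarily chosen $h$ is nonetheless an AS.R. mapping and that $h\circ f\mid_{C}$ is close to $i_{C}$; this is where the hypotheses on $g$ do the work, and it is the main obstacle, since a set-theoretic section has no reason to be coarse. The key device is the identity $f(h(B))=B$ together with transitivity of $\lambda$. For preservation of alikeness: if $B_{1}\lambda^{\prime}_{f(C)}B_{2}$, then $f(h(B_{1}))=B_{1}\lambda^{\prime}B_{2}=f(h(B_{2}))$, so applying the AS.R. mapping $g$ gives $g(f(h(B_{1})))\,\lambda\, g(f(h(B_{2})))$; since $g\circ f$ is close to $i_{X}$ we have $h(B_{i})\,\lambda\, g(f(h(B_{i})))$ for $i\in\{1,2\}$, and chaining these three alikenesses yields $h(B_{1})\,\lambda\, h(B_{2})$, i.e. $h(B_{1})\lambda_{C}h(B_{2})$. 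The same identity gives closeness of $h\circ f\mid_{C}$ to $i_{C}$: for $A\subseteq C$ set $A^{\prime}=h(f(A))$; then $f(A^{\prime})=f(A)$, so $g(f(A^{\prime}))=g(f(A))$, and closeness of $g\circ f$ to $i_{X}$ forces $A^{\prime}\,\lambda\, g(f(A^{\prime}))=g(f(A))\,\lambda\, A$, whence $A^{\prime}\lambda A$. Finally, properness of $h$ is easy: if $A\subseteq C$ is bounded then $h^{-1}(A)\subseteq f(A)$, which is bounded in $f(C)$ as above, so $h^{-1}(A)$ is bounded. Thus $h$ witnesses that $f\mid_{C}$ is an asymptotic equivalence. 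I note that only the AS.R.-mapping property of $g$ and the closeness $g\circ f\sim i_{X}$ enter the argument; the condition $f\circ g\sim i_{Y}$ is not needed.
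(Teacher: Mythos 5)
The paper itself gives no argument for this statement---it is quoted directly from \cite{maa} (Lemma 2.20)---so your proposal can only be judged on its own terms. The heart of your proof, the set-theoretic section $h\colon f(C)\to C$ with $f\circ h=\id_{f(C)}$, is correct and is a clean way to deal with the real difficulty, namely that $g(f(C))$ need not lie inside $C$. The verification that $h$ preserves alikeness (via the identity $f(h(B_{i}))=B_{i}$, the AS.R. property of $g$, closeness of $g\circ f$ to $i_{X}$, and transitivity of $\lambda$), the closeness of $h\circ f\mid_{C}$ to $i_{C}$, and the properness of $h$ (via $h^{-1}(A)\subseteq f(A)$) are all sound, as is your observation that the condition $f\circ g\sim i_{Y}$ is never used.

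There is, however, one defective step: your ``properness of $f\mid_{C}$'' sentence does not prove properness. Properness of $f\mid_{C}$ requires that the preimage $(f\mid_{C})^{-1}(B)$ of every bounded $B\subseteq f(C)$ be bounded in $(C,\lambda_{C})$; what you actually prove is the reverse implication, that the image $f(A)$ of a bounded $A\subseteq C$ is bounded in $(f(C),\lambda'_{f(C)})$. That statement is true, and you do need it later for the properness of $h$, but it is not properness of $f\mid_{C}$, so as written this condition of Definition \ref{map} is left unverified. The repair is routine: if $B\subseteq f(C)$ is bounded in the subspace, it is bounded in $Y$, so $f^{-1}(B)$ is bounded in $X$ by properness of $f$; then $(f\mid_{C})^{-1}(B)=f^{-1}(B)\cap C$ is a subset of a bounded set, hence bounded in $X$ by Proposition \ref{6setare}; finally, to produce a witness point inside $C$ (which the subspace notion of boundedness requires), note that if $\emptyset\neq A\subseteq C$ and $A\lambda\{x\}$ with $x\in X$, then for any $c\in A$ Proposition \ref{6setare} gives $\{c\}\lambda\{x\}$, and transitivity of the equivalence relation $\lambda$ gives $A\lambda\{c\}$. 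The same witness-point adjustment is tacitly needed (and available, since your witnesses $f(x)$ do lie in $f(C)$) in your properness argument for $h$. With that paragraph rewritten, your proof is complete.
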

\subsection{asymptotic compactification}
We recall here what is the asymptotic compactification of an AS.R. space. For details see \cite{maa}.
\begin{definition}
Let $(X,\lambda)$ is an AS.R. space. We call two subsets $A_{1}$ and $A_{2}$ of $X$ \emph{asymptotically disjoint} if $L_{1}\bar{\lambda} L_{2}$, for all unbounded subsets $L_{1}\subseteq A_{1}$ and $L_{2}\subseteq A_{2}$. We say that the AS.R. space $(X,\lambda)$ is \emph{asymptotically normal} if for asymptotically disjoint subsets $A_{1}$ and $A_{2}$ of $X$, there exist $X_{1}\subseteq X$ and $X_{2}\subseteq X$ such that $X=X_{1}\bigcup X_{2}$ and $A_{i}$ and $X_{i}$ are asymptotically disjoint for $i\in \{1,2\}$.
\end{definition}
Notice that if $D$ is a bounded subset of an AS.R. space $(X,\lambda)$, then it is asymptotically disjoint from each subset of $X$ (Since $D$ does not contain any unbounded subset).
Let $(X,d)$ be a metric space. It can be proved that the AS.R. space $(X,\lambda_{d})$ is asymptotically normal (\cite{maa}). Two subsets $A$ and $B$ are called asymptotically disjoint with respect to $d$ if for some $x_{0}\in X$, $\lim_{r\rightarrow \infty} d(A\setminus\textbf{B}(x_{0},r),B\setminus\textbf{B}(x_{0},r))=\infty$ (\cite{Ind} and \cite{Uni}). It can be shown that two unbounded subsets of $(X,\lambda_{d})$ are asymptotically disjoint if and only if they are asymptotically disjoint with respect to $d$.
 \begin{definition}
Let $(X,\mathcal{T})$ be a topological space. We call an AS.R. $\lambda$ on $X$ \emph{compatible} with $\mathcal{T}$ if\\
i) For each subset $A$ of $X$ there exists an open subset $A\subseteq U$ such that $A\lambda U$.\\
ii) $A \lambda \bar{A}$ for all $A\subseteq X$.\\
A compatible AS.R. on $X$ is called \emph{proper} if each bounded subset of $X$ has compact closure.
\end{definition}
Let $X$ be a topological space. A coarse structure $\mathcal{E}$ on $X$ is said to be \emph{compatible with the topology} of $X$ if for each $E\in \mathcal{E}$ there exists an open $F\in \mathcal{E}$ such that $E\subseteq F$. A compatible coarse structure on a topological space is called \emph{proper} if each bounded subset has compact closure. It can be shown that the associated AS.R. to a compatible (proper) coarse structure is a compatible (proper) AS.R.  Let $\mathcal{E}$ be a proper coarse structure on a topological space $X$. A continuous and bounded map $f:X\rightarrow \mathbb{C}$ is called a \emph{Higson function} if for each $E\in \mathcal{E}$ and $\epsilon >0$ there exists a compact subset $K$ of $X$ such that $| f(x)-f(y)| <\epsilon$ for all $(x,y)\in E\setminus (K\times K)$. The family of all Higson functions is denoted by $C_{h}(X)$. There is a compactification $hX$ of $X$, such that the family of all continuous functions on $hX$ and $C_{h}(X)$ are isomorphic (section 2.3 of \cite{Roe}). The compactification $hX$ of $X$ is called the \emph{Higson compactification} of $X$. The compact set $\nu X=hX\setminus X$ is called the \emph{Higson corona} of $X$. For a subset $A$ of $X$ we denote by $\bar{A}$ the closure of $A$ in $hX$ and by $\nu A$ the intersection of $\bar{A}$ and the Higson corona of $X$.\\
Let us recall here some notions about \emph{proximity spaces}.
\begin{definition}
A relation $\delta$ on the family of all subsets of a nonempty set $X$ is called a \emph{proximity} on $X$ if for all $A,B,C\subseteq X$,\\
i) If $A\delta B$ then $B\delta A$.\\
ii) $\emptyset \bar{\delta} A$.\\
iii) If $A\bigcap B\neq \emptyset$ then $A\delta B$.\\
iv) $A\delta(B\bigcup C)$ if and only if $A\delta B$ or $A\delta C$.\\
v) If $A\bar{\delta} B$ then there exists some $E\subseteq X$ such that $A\bar{\delta} E$ and $(X\setminus E)\bar{\delta} B$, where $A\bar{\delta} B$ means that $A\delta B$ is not true.\\
The pair $(X,\delta)$ is called a proximity space.
\end{definition}
A \emph{cluster} $\mathcal{C}$ in a proximity space $(X,\delta)$ is a family of subsets of $X$ such that, i) for all $A,B\in \mathcal{C}$ we have $A\delta B$, ii) if $A,B\subseteq X$ and $A\bigcup B\in \mathcal{C}$ then $A\in \mathcal{C}$ or $B\in \mathcal{C}$, iii) if $A\delta B$ for all $B\in \mathcal{C}$ then $A\in \mathcal{C}$. A proximity space $(X,\delta)$ is said to be \emph{separated} if $x\delta y$ implies $x=y$, for all $x,y\in X$. A proximity $\delta$ on a topological space $(X,\mathcal{T})$ is said to be \emph{compatible with $\mathcal{T}$} if $a\in \bar{A}$ and $a\delta A$ are equivalent. Let $\mathfrak{X}$ denotes the family of all clusters in a separated proximity space $(X,\delta)$. For $\mathfrak{M} , \mathfrak{N} \subseteq \mathfrak{X}$ define $\mathfrak{M} \delta^{*} \mathfrak{N}$ if $A\subseteq X$ absorbs $\mathfrak{M}$ and $B\subseteq X$ absorbs $\mathfrak{N}$ then $A\delta B$. A set $D$ \emph{absorbs} $\mathfrak{M} \subseteq \mathfrak{X}$ means that $D\in \mathcal{C}$ for all $\mathcal{C} \in \mathfrak{M}$. The relation $\delta^{*}$ is a proximity on $\mathfrak{X}$. The pair $(\mathfrak{X},\delta^{*})$ is a compact proximity space and it is called the \emph{Smirnov compactification } of $(X,\delta)$ (section 7 of \cite{Nai}).\\
Let $(X,\mathcal{T})$ be a topological space and $\lambda$ be an AS.R. compatible with $\mathcal{T}$. For two nonempty subsets $A$ and $B$ of $X$ define $A\sim B$ if $A=B$ or $A$ and $B$ are unbounded asymptotically alike subsets of $X$. For two subsets $A$ and $B$ of $X$, define $A\delta_{\lambda} B$ if there are $L_{1}\subseteq \bar{A}$ and $L_{2}\subseteq \bar{B}$ such that $L_{1}\sim L_{2}$. The following proposition is proved in \cite{maa}.
\begin{proposition}
Let $(X,\mathcal{T})$ be a normal topological space and let $\lambda$ be a proper and asymptotically normal AS.R. on $X$. Then $\delta_{\lambda}$ is a separated proximity on $X$ and it is compatible with $\mathcal{T}$.
\end{proposition}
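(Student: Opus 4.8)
The plan is to organise everything around a reformulation of the relation. Unwinding the definition of $\sim$, one checks that $A\delta_{\lambda}B$ holds precisely when either $\bar{A}\cap\bar{B}\neq\emptyset$ (the alternative $L_{1}=L_{2}$, a common nonempty subset of the two closures) or $\bar{A}$ and $\bar{B}$ are not asymptotically disjoint (the alternative of unbounded asymptotically alike $L_{1}\subseteq\bar{A}$, $L_{2}\subseteq\bar{B}$). Equivalently, $A\bar{\delta}_{\lambda}B$ means exactly that $\bar{A}\cap\bar{B}=\emptyset$ and $\bar{A},\bar{B}$ are asymptotically disjoint. With this description the axioms (i)--(iii) of a proximity are immediate: symmetry comes from the symmetry of $\lambda$ and of equality; $\emptyset\bar{\delta}_{\lambda}A$ holds because $\bar{\emptyset}=\emptyset$ has no nonempty subset whereas every witness for $\sim$ is nonempty; and $A\cap B\neq\emptyset$ forces $\bar{A}\cap\bar{B}\neq\emptyset$. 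For axiom (iv) the only nontrivial direction is, in the unbounded case, to split an unbounded $L_{2}\subseteq\overline{B\cup C}=\bar{B}\cup\bar{C}$ as $(L_{2}\cap\bar{B})\cup(L_{2}\cap\bar{C})$, note that one piece is unbounded because finite unions of bounded sets are bounded, and then apply Proposition \ref{6setare} to $L_{1}\lambda L_{2}$ to extract an unbounded subset of $\bar{A}$ asymptotically alike to that piece. Separatedness and compatibility are just as direct: a singleton is bounded, so in $\{x\}\delta_{\lambda}\{y\}$ both witnesses are bounded and hence must coincide, which gives $\overline{\{x\}}\cap\overline{\{y\}}\neq\emptyset$ and thus $x=y$ (points being closed); and $a\in\bar{A}$ is witnessed by $L_{1}=L_{2}=\{a\}$, while any witness for $\{a\}\delta_{\lambda}A$ must be the bounded singleton $\{a\}\subseteq\bar{A}$.

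The substantial part is the strong axiom (v), which is where I expect the main difficulty. Assume $A\bar{\delta}_{\lambda}B$, so that $\bar{A}\cap\bar{B}=\emptyset$ and $\bar{A},\bar{B}$ are asymptotically disjoint. I would build the separator $E$ from three ingredients. First, asymptotic normality applied to $\bar{A}$ and $\bar{B}$ gives a cover $X=X_{1}\cup X_{2}$ with $\bar{A}$ asymptotically disjoint from $X_{1}$ and $\bar{B}$ asymptotically disjoint from $X_{2}$. Second, compatibility property (i), applied to $\bar{A}$ and to $\bar{B}$, provides open sets $U\supseteq\bar{A}$ and $V_{0}\supseteq\bar{B}$ that are asymptotically alike to $\bar{A}$ and to $\bar{B}$, respectively. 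Third, normality of $X$ yields an open $W\supseteq\bar{B}$ with $\bar{W}\cap\bar{A}=\emptyset$; setting $V=V_{0}\cap W$ gives an open neighbourhood of $\bar{B}$ with $\bar{V}\cap\bar{A}=\emptyset$ that is still asymptotically alike to $\bar{B}$, since any set between $\bar{B}$ and $V_{0}$ is alike to both (an instant consequence of axiom (i) of Definition \ref{asr} with reflexivity and transitivity of $\lambda$).

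With these in hand I would take $E:=(X_{1}\setminus U)\cup V$ and verify the four assertions packaged in $A\bar{\delta}_{\lambda}E$ and $(X\setminus E)\bar{\delta}_{\lambda}B$, each as a topological and an asymptotic statement. The topological parts use only inclusions: $\overline{X_{1}\setminus U}\subseteq X\setminus U$ is disjoint from $\bar{A}\subseteq U$ and $\bar{V}\cap\bar{A}=\emptyset$, so $\bar{A}\cap\bar{E}=\emptyset$; and $X\setminus E\subseteq X\setminus V$ is disjoint from $\bar{B}\subseteq V$, so $\bar{B}\cap\overline{X\setminus E}=\emptyset$. The asymptotic parts use the inclusions $\bar{E}\subseteq\overline{X_{1}}\cup\bar{V}$ and $\overline{X\setminus E}\subseteq\overline{X\setminus X_{1}}\cup\bar{U}\subseteq\overline{X_{2}}\cup\bar{U}$, together with three stability properties of ``being asymptotically disjoint from a fixed set'', all derived from Proposition \ref{6setare} and compatibility $C\lambda\bar{C}$: it passes to subsets, it is preserved when one set is replaced by an asymptotically alike one, and it is preserved under finite unions. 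Thus $\bar{A}$ is asymptotically disjoint from $\overline{X_{1}}$ (closing up $X_{1}$) and from $\bar{V}$ (which is alike to $\bar{B}$, and $\bar{A}$ is asymptotically disjoint from $\bar{B}$), hence from $\overline{X_{1}}\cup\bar{V}\supseteq\bar{E}$; symmetrically $\bar{B}$ is asymptotically disjoint from $\overline{X_{2}}$ and from $\bar{U}$ (alike to $\bar{A}$), hence from $\overline{X\setminus E}$.

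The crux, and the step I expect to be most delicate, is precisely this interplay: neither a purely topological nor a purely asymptotic separation alone works. A topological neighbourhood of $\bar{A}$ can contain sets that escape to infinity alongside $\bar{A}$, so $X\setminus U$ need not be asymptotically disjoint from $\bar{A}$, which is why the asymptotic separator $X_{1}$ is indispensable; conversely $X_{1}$ may meet $\bar{A}$ in a (bounded) region, which is why $E$ must be trimmed by $U$ and augmented by $V$. The single lemma that welds the two scales together is that asymptotic disjointness is unaffected by passing to topological closures; I would isolate and prove it first, using $C\lambda\bar{C}$ together with Proposition \ref{6setare} to pull an unbounded subset of $\bar{C}$ back inside $C$.
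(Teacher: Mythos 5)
First, a caveat on the comparison itself: the paper does not actually prove this proposition; it states it and defers the proof to \cite{maa}. So your proposal can only be judged against the definitions as given here, and against those definitions it contains one genuine gap. Your overall architecture is otherwise sensible and likely close to the intended argument: the reformulation ($A\delta_{\lambda}B$ iff $\bar{A}\cap\bar{B}\neq\emptyset$ or $\bar{A},\bar{B}$ are not asymptotically disjoint) is correct, axioms (i)--(iii) are as routine as you say, and the separator $E=(X_{1}\setminus U)\cup V$ for axiom (v), welding asymptotic normality, compatibility and topological normality, is exactly the right kind of construction. The gap is the claim that a finite union of bounded sets is bounded. You use it explicitly in axiom (iv) (to conclude that one of the pieces $L_{2}\cap\bar{B}$, $L_{2}\cap\bar{C}$ is unbounded) and again, in disguise, in axiom (v), as the third ``stability property'' (asymptotic disjointness from $S$ and from $T$ implies asymptotic disjointness from $S\cup T$; proving this requires the same splitting). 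In an AS.R.\ space, bounded means $\lambda$-related to a singleton. From $P\lambda\{p\}$ and $Q\lambda\{q\}$, axiom (i) of Definition \ref{asr} yields only $P\cup Q\,\lambda\,\{p,q\}$, and nothing in Definition \ref{asr}, compatibility, properness, or asymptotic normality forces a two-point set to be bounded.

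This is not a repairable oversight: under the stated hypotheses the claim, and indeed proximity axiom (iv) itself, can fail. Let $X=\{a_{1},a_{2},b,c\}$ with the discrete topology, and declare two nonempty subsets $\lambda$-related iff they lie in the same one of three classes: nonempty subsets of $\{a_{1},b\}$, nonempty subsets of $\{a_{2},c\}$, and sets meeting both (with $\emptyset$ related only to itself). Axiom (i) holds because the class of a union is determined by the classes of its parts; axiom (ii) is checked directly, the only nontrivial case being $A$ meeting both halves with $B_{1}$, $B_{2}$ in different halves, where $A_{1}=A\cap\{a_{1},b\}$ and $A_{2}=A\cap\{a_{2},c\}$ work. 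This AS.R.\ is compatible with the topology (every set is open and closed), proper (the space is finite), and asymptotically normal (in any asymptotically disjoint pair one member has no unbounded subset, so one may take the corresponding $X_{i}$ to be $X$ and the other to be $\emptyset$). Now take $A=\{a_{1},a_{2}\}$, $B=\{b\}$, $C=\{c\}$: then $A\,\delta_{\lambda}\,(B\cup C)$, since $\{a_{1},a_{2}\}$ and $\{b,c\}$ are unbounded and asymptotically alike, yet $A\bar{\delta}_{\lambda}B$ and $A\bar{\delta}_{\lambda}C$, since $\{b\}$ and $\{c\}$ contain no unbounded subsets. The same example kills your union-stability lemma: $\{a_{1},a_{2}\}$ is asymptotically disjoint from $\{b\}$ and from $\{c\}$ but not from $\{b,c\}$. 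So the proposition implicitly needs an extra hypothesis --- that every finite (equivalently, every two-point) subset of $X$ is bounded, the AS.R.\ analogue of coarse connectedness, which holds for $\lambda_{d}$ and for AS.R.s of connected coarse structures and is presumably in force in \cite{maa}. Once that is assumed, your argument does go through, granting also that ``normal'' is read as $T_{4}$ (you need points closed for separatedness and compatibility; an indiscrete space with the all-alike AS.R.\ satisfies every stated hypothesis and gives a non-separated $\delta_{\lambda}$).
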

\begin{definition}
Let $(X,\mathcal{T})$ be a normal topological space and let $\lambda$ be a proper and asymptotically normal AS.R. on $X$. We call the Smirnov compactification $(\mathfrak{X},\delta_{\lambda}^{*})$ of $X$, the \emph{asymptotic compactification} of $X$.
\end{definition}
\begin{proposition}
Let $X$ be a normal topological space and let $\mathcal{E}$ be a proper coarse structure on $X$. Assume that the AS.R. associated to $\mathcal{E}$ is asymptotically normal. Then the asymptotic compactification of $X$ is homeomorphic to $hX$.
\end{proposition}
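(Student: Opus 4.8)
The plan is to invoke the uniqueness half of Smirnov's compactification theorem: for a Tychonoff space, two compactifications are equivalent precisely when they induce the same compatible proximity, a compactification $Y$ of $X$ inducing the proximity $A\,\delta_Y\,B \iff \cl_Y A\cap \cl_Y B\neq \emptyset$ (see \cite{Nai}). The asymptotic compactification $(\mathfrak{X},\delta_\lambda^{*})$ is by definition the Smirnov compactification of $(X,\delta_\lambda)$ with $\lambda=\lambda_{\mathcal{E}}$, so it induces exactly $\delta_\lambda$. Hence it suffices to prove that the proximity induced by the Higson compactification $hX$ coincides with $\delta_\lambda$. Here $X$ is Tychonoff (being normal, and locally compact Hausdorff through the properness of $\mathcal{E}$), so both compactifications exist and Smirnov's theorem applies.

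Next I would unwind the proximity induced by $hX$. Since $X$ is open and dense in $hX$, for every $A\subseteq X$ one has $\cl_{hX}A\cap X=\cl_X A$ and $\cl_{hX}A\cap \nu X=\nu A$, whence $\cl_{hX}A\cap \cl_{hX}B\neq \emptyset$ if and only if $\cl_X A\cap \cl_X B\neq \emptyset$ or $\nu A\cap \nu B\neq \emptyset$. I would then match these two alternatives with the two clauses of the relation $\sim$ defining $\delta_\lambda$, being careful that the symbol $\bar A$ in the definition of $\delta_\lambda$ denotes the closure in $X$, not in $hX$. The first alternative, $\cl_X A\cap \cl_X B\neq \emptyset$, corresponds exactly to the case $L_1=L_2$: a common point $x$ of the two $X$-closures gives $L_1=L_2=\{x\}\subseteq \bar A,\bar B$ with $L_1\sim L_2$, and conversely a nonempty $L_1=L_2$ produces such a point.

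The crux is to identify the second alternative, $\nu A\cap \nu B\neq \emptyset$, with the remaining clause of $\sim$. Since $\cl_{hX}(\cl_X A)=\cl_{hX}A$ we have $\nu A=\nu(\bar A)$, so it is enough to argue with the closures $\bar A,\bar B$. The key lemma, a standard property of the Higson corona of a proper coarse structure, states that $\nu P\cap \nu Q=\emptyset$ if and only if $P$ and $Q$ are asymptotically disjoint, i.e. $L_1\bar\lambda L_2$ for all unbounded $L_1\subseteq P$, $L_2\subseteq Q$. Applying it to $P=\bar A$, $Q=\bar B$ turns ``$\nu A\cap \nu B\neq \emptyset$'' into ``there exist unbounded $L_1\subseteq \bar A$, $L_2\subseteq \bar B$ with $L_1\lambda L_2$'', which is precisely the remaining clause of $L_1\sim L_2$. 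The nontrivial direction of the lemma builds, from asymptotically disjoint sets, a Higson function equal to $0$ near one set and $1$ near the other; its continuous extension to $hX$ then separates the two corona sets. This is exactly where the asymptotic normality of $\lambda_{\mathcal{E}}$, a coarse Urysohn-type hypothesis, together with the normality of $X$, is used.

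I expect the main obstacle to be this key lemma on corona disjointness, and specifically the construction of the separating Higson function from asymptotic disjointness, which must be carried out purely from the axioms of a proper, asymptotically normal AS.R. rather than from a metric. Once the lemma is in place, combining the two matched alternatives shows $A\,\delta_{hX}\,B \iff A\,\delta_\lambda\,B$, and the homeomorphism $(\mathfrak{X},\delta_\lambda^{*})\cong hX$ follows from Smirnov uniqueness; the remaining identifications are routine bookkeeping of the two clauses of $\sim$.
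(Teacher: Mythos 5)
Your overall frame is sound and, as far as the structure of the argument goes, it is the natural one: the paper itself gives no argument here (its proof is literally ``See \cite{maa}''), and invoking Smirnov uniqueness \cite{Nai} and then matching the proximity induced by $hX$ against $\delta_{\lambda}$, with the splitting $\cl_{hX}A\cap\cl_{hX}B=(\cl_{X}A\cap\cl_{X}B)\cup(\nu A\cap\nu B)$ and the identification of the first alternative with the $L_{1}=L_{2}$ clause of $\sim$, is a correct reduction. The problem is where you put all the weight. Your ``key lemma'' --- $\nu P\cap\nu Q=\emptyset$ if and only if $P$ and $Q$ are asymptotically disjoint --- is not ``a standard property of the Higson corona of a proper coarse structure.'' It is standard only for proper \emph{metric} spaces, where the function $x\mapsto d(x,P)/(d(x,P)+d(x,Q))$ is a Higson function; for an abstract proper coarse structure there is no such formula, and in this generality the statement is exactly Corollary \ref{11setare} of this paper, which the paper (following \cite{maa}, Proposition 4.24) states \emph{after} and as a \emph{consequence of} the proposition you are proving. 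So your proposal inverts the paper's direction of derivation: citing the lemma would be circular within this development, and proving it directly --- manufacturing a Higson function separating two asymptotically disjoint sets out of nothing but asymptotic normality of $\lambda_{\mathcal{E}}$ and normality of $X$ --- is the entire analytic content of the proposition, which your write-up explicitly defers (``the main obstacle''). A proof proposal whose only nontrivial step is an acknowledged obstacle is not yet a proof.

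There is a repair that keeps your Smirnov strategy but removes the need for the corona criterion: compare function algebras instead of coronas. Two compactifications of $X$ coincide if and only if the restrictions to $X$ of their continuous functions coincide, and $C(hX)|_{X}=C_{h}(X)$ by construction, while $C(\mathfrak{X})|_{X}$ is the algebra of bounded $\delta_{\lambda}$-proximally continuous functions \cite{Nai}. Both inclusions $C_{h}(X)\subseteq C(\mathfrak{X})|_{X}$ and $C(\mathfrak{X})|_{X}\subseteq C_{h}(X)$ are then direct verifications from the definitions: a Higson function maps unbounded $\lambda_{\mathcal{E}}$-alike sets to sets at distance zero (hence with intersecting closures, by boundedness of the range), so it is proximally continuous; conversely, if a proximally continuous $f$ failed the Higson condition for some $E\in\mathcal{E}$ and $\epsilon>0$, the violating pairs escaping every compact set would produce (using properness and passing to a sub-collection on which $f$ converges on the first coordinates) unbounded $\lambda_{\mathcal{E}}$-alike sets whose images have disjoint closures, a contradiction. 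Neither direction requires constructing a Higson function ex nihilo; after the homeomorphism is established, Urysohn's lemma on the compact Hausdorff space $\mathfrak{X}$ supplies such functions for free, and Corollary \ref{11setare} drops out as the corollary it is in the paper.
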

\begin{proof}
See \cite{maa}.
\end{proof}
\begin{corollary}\label{11setare}(\cite{maa} \emph{Proposition 4.24})
 Let $X$ be a normal topological space and let $\mathcal{E}$ be a proper coarse structure on $X$. Assume that the AS.R. associated to $\mathcal{E}$ is asymptotically normal. Two subsets $A$ and $B$ of $X$ are asymptotically disjoint if and only if $\nu A\bigcap \nu B=\emptyset$.
\end{corollary}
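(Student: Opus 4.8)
The plan is to transport the statement to the asymptotic compactification. By the preceding proposition, the Higson compactification $hX$ is homeomorphic to the Smirnov compactification $(\mathfrak{X},\delta_{\lambda}^{*})$ of the proximity $\delta_{\lambda}$, where here $\lambda=\lambda_{\mathcal{E}}$. The engine of the whole argument is the fundamental property of the Smirnov compactification (section 7 of \cite{Nai}): for $P,Q\subseteq X$ one has $\overline{P}\cap\overline{Q}=\emptyset$ (closures taken in $hX$) if and only if $P\bar{\delta}_{\lambda}Q$. I would also record two elementary facts that follow from properness of $\mathcal{E}$: an unbounded set has nonempty corona trace, and deleting a relatively compact set does not change the corona trace of a set, so that $\nu A$ depends only on the unbounded part of $A$. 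Since $\overline{P}\cap X=\cl_{X}P$, these give $\nu A\cap\nu B=\overline{A}\cap\overline{B}\cap\nu X$, which is the quantity to control.

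For the direction ``not asymptotically disjoint $\Rightarrow\nu A\cap\nu B\neq\emptyset$'', suppose there are unbounded $L_{1}\subseteq A$ and $L_{2}\subseteq B$ with $L_{1}\lambda L_{2}$. Because $\lambda=\lambda_{\mathcal{E}}$, there is $E\in\mathcal{E}$ with $L_{1}\subseteq E(L_{2})$ and $L_{2}\subseteq E(L_{1})$; since Higson functions have $E$-variation vanishing near the corona, enlarging by a controlled set does not alter the corona trace, so $\nu L_{1}=\nu L_{2}$. As $L_{1}$ is unbounded this common trace is nonempty, and it sits inside $\nu A\cap\nu B$. This proves the ``if'' half of the equivalence.

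The substantial direction is ``asymptotically disjoint $\Rightarrow\nu A\cap\nu B=\emptyset$'', and here lies the obstacle: asymptotic disjointness does \emph{not} make $A$ and $B$ far in $\delta_{\lambda}$, because their closures in $X$ may still meet, and such a meeting is exactly what the $L_{1}=L_{2}$ alternative in the definition of $\delta_{\lambda}$ detects. The observation that removes this obstacle is that the common closure must be bounded. Indeed, put $P=\cl_{X}A\cap\cl_{X}B$; were $P$ nonempty and unbounded, then applying Proposition \ref{6setare} to $\cl_{X}A\,\lambda\,A$ and to $\cl_{X}B\,\lambda\,B$ would produce unbounded subsets $P_{1}\subseteq A$ and $P_{2}\subseteq B$ each asymptotically alike to an unbounded subset of $P$, hence alike to each other, contradicting asymptotic disjointness. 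Thus $P$ is bounded, and (using properness, in the locally compact Hausdorff setting underlying the Higson construction) it admits an open neighborhood $V$ with compact closure.

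Excising $V$ then finishes the argument. Set $A_{0}=A\setminus V$ and $B_{0}=B\setminus V$. Since $A\cap V$ and $B\cap V$ are relatively compact, hence bounded, deleting them does not change corona traces, so $\nu A_{0}=\nu A$ and $\nu B_{0}=\nu B$. By construction $\cl_{X}A_{0}$ and $\cl_{X}B_{0}$ avoid $V\supseteq P$, so $\cl_{X}A_{0}\cap\cl_{X}B_{0}\subseteq P\cap V^{c}=\emptyset$; moreover $A_{0},B_{0}$ inherit asymptotic disjointness, so there is no asymptotic witness either, and therefore $A_{0}\bar{\delta}_{\lambda}B_{0}$. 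The Smirnov property yields $\overline{A_{0}}\cap\overline{B_{0}}=\emptyset$ in $hX$, whence $\nu A\cap\nu B=\nu A_{0}\cap\nu B_{0}\subseteq\overline{A_{0}}\cap\overline{B_{0}}=\emptyset$. In this scheme asymptotic normality is used only to ensure that $\delta_{\lambda}$ is genuinely a proximity, so that the Smirnov machinery and the identification $hX\cong\mathfrak{X}$ are available; the genuinely delicate point, which I expect to be the crux, is the passage to a bounded common closure followed by the excision, as this is precisely what separates intersection ``at infinity'' from intersection inside $X$.
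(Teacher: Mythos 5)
Your argument is correct and follows the route the paper itself takes: the paper presents this statement as a corollary of the immediately preceding proposition identifying $hX$ with the Smirnov compactification of $\delta_{\lambda_{\mathcal{E}}}$ (deferring all details to \cite{maa}, Proposition 4.24), and your proof transports the statement through exactly that identification, using the Smirnov intersection property together with your bounded-common-closure and excision argument to separate intersection ``at infinity'' from intersection inside $X$, and standard Higson-corona facts for the easy direction. One step is left implicit: to conclude $A_{0}\bar{\delta}_{\lambda}B_{0}$ you must rule out proximity witnesses $L_{1}\subseteq \cl_{X}A_{0}$, $L_{2}\subseteq \cl_{X}B_{0}$, which requires the \emph{closures} of $A_{0}$ and $B_{0}$ to be asymptotically disjoint rather than just $A_{0}$ and $B_{0}$ themselves --- but this follows by the same combination of Proposition~\ref{6setare} with compatibility ($A_{0}\lambda \cl_{X}A_{0}$) that you already deploy to show $P$ is bounded, so it is an elision rather than a gap.
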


\subsection{Asymptotic inductive dimension}\label{111}
Let us recall the definition of a separator in a topological space.
\begin{definition}
Let $X$ be a topological space. A subset $C\subseteq X$ is called a separator between disjoint subsets $A$ and $B$ of $X$, if $X\setminus C=U\bigcup V$ where $U$ and $V$ are disjoint and open and they contain $A$ and $B$ respectively.
\end{definition}
We are recalling the following two definitions from \cite{Ind} and \cite{Uni}.
\begin{definition}
Let $X$ be a topological space and let $\mathcal{E}$ be a proper coarse structure on $X$. Suppose that $A$ and $B$ are two asymptotically disjoint subsets of $X$. We call a subset $C$ of $X$ an \emph{asymptotic separator} between $A$ and $B$, if $\nu C$ is a separator between $\nu A$ and $\nu B$ in $\nu X$.
\end{definition}
\begin{definition}
Let $(X,d)$ be a proper metric space. The \emph{asymptoic inductive dimension} of $X$, denoted by  $\operatorname{asInd}X$, is defined inductively by $\operatorname{asInd}X=-1$ if and only if $X$ is bounded and for $n\in \mathbb{N}\bigcup \{0\}$, $\operatorname{asInd}X\leq n$ if for each two asymptotically disjoint subsets $A$ and $B$ of $X$ there is an asymptotic separator $C$ between them such that $\operatorname{asInd}C\leq n-1$. We say that $\operatorname{asInd}X=n$ if $\operatorname{asInd}X\leq n$ and $\operatorname{asInd}X\leq n-1$ is not true. If $\operatorname{asInd}X\leq n$ is not true for each $n\in \mathbb{N}$, we say that the asymptotic inductive dimension of $X$ is infinite.
\end{definition}
Let us recall that the \emph{asymptotic dimension} of a metric space $X$ is less than $n\in \mathbb{N}\bigcup\{0\}$ if for each $r>0$ there exists a uniformly bounded cover of $X$ with $r$-multiplicity less than $n+1$. The $r$-multiplicity of a uniformly bounded cover $\mathcal{U}$ of $X$ is the smallest number $n\in \mathbb{N}$ such that each ball of radius $r$ in $X$ intersects at most $n$ elements of $\mathcal{U}$. The following proposition is proved in \cite{Uni}.
\begin{proposition}\label{asdim}
Let $X$ be a unbounded proper metric space with finite asymptotic dimension. Then the asymptotic inductive dimension of $X$ is equal to its asymptotic dimension.
\end{proposition}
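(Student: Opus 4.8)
The plan is to factor the equality through the Higson corona $\nu X$, exploiting the identity $\dim\nu X=\operatorname{asdim}X$, which holds for proper metric spaces of finite asymptotic dimension and which I would take as the principal external input (Dranishnikov's corona dimension theorem). The dictionary between the two sides is Corollary \ref{11setare}: $A$ and $B$ are asymptotically disjoint precisely when $\nu A\cap\nu B=\emptyset$, so asymptotically disjoint pairs in $X$ correspond to disjoint compact sets in $\nu X$, and by definition an asymptotic separator between $A$ and $B$ is a set $C$ for which $\nu C$ separates $\nu A$ from $\nu B$. First I would establish a realization lemma: every closed $F\subseteq\nu X$ equals $\nu C$ for some $C\subseteq X$, and for every $C\subseteq X$ one has $\dim\nu C=\operatorname{asdim}C$ (the latter by applying the corona dimension formula to the proper subspace $C$). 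This lemma is what allows separators produced topologically in $\nu X$ to be pulled back to honest subsets of $X$ without losing control of dimension.

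Granting this, I would prove $\operatorname{asInd}X\le\operatorname{asdim}X$ by induction on $n=\operatorname{asdim}X$. When $n=0$ the corona is zero-dimensional; disjoint closed sets $\nu A,\nu B$ are then separated by the empty set, so $C=\emptyset$ (a bounded set, with $\operatorname{asInd}C=-1$) is an asymptotic separator and $\operatorname{asInd}X\le 0$. For the inductive step, take asymptotically disjoint $A,B$; then $\nu A,\nu B$ are disjoint closed subsets of the normal space $\nu X$, and since $\dim\nu X=n$, the partition theorem of covering dimension furnishes a separator $S$ between them with $\dim S\le n-1$. Writing $S=\nu C$ via the realization lemma, $C$ is an asymptotic separator between $A$ and $B$ and $\operatorname{asdim}C=\dim\nu C\le n-1$, so $\operatorname{asInd}C\le n-1$ by the inductive hypothesis. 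Hence $\operatorname{asInd}X\le n$.

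For the opposite inequality I would route through the large inductive dimension of the corona and the universally valid estimate $\dim\nu X\le\operatorname{Ind}\nu X$ for the (compact, hence normal) space $\nu X$. Using the realization lemma and Corollary \ref{11setare} in reverse, every pair of disjoint closed subsets of $\nu X$ has the form $(\nu A,\nu B)$ with $A,B$ asymptotically disjoint, and every asymptotic separator $C$ gives a topological separator $\nu C$. A parallel induction on $n$ then yields $\operatorname{Ind}\nu X\le\operatorname{asInd}X$: if $\operatorname{asInd}X\le n$, a witnessing separator $\nu C$ satisfies $\operatorname{asInd}C\le n-1$, whence $\operatorname{Ind}\nu C\le n-1$ by induction applied to the subspace $C$. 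Combining $\operatorname{asdim}X=\dim\nu X\le\operatorname{Ind}\nu X\le\operatorname{asInd}X$ closes the loop.

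The decisive difficulty is the realization lemma in both of its halves. One must show that an arbitrary closed subset of the Higson corona --- in particular a topological separator --- is actually the trace $\nu C$ of some subset $C\subseteq X$, and that passing to the subspace $C$ does not disturb the identification, i.e.\ that the corona of $C$ agrees with $\bar{C}\cap\nu X$ and that $\dim\nu C=\operatorname{asdim}C$. This requires a careful study of how subsets of $X$ approach the corona in the Higson compactification, and it is where the properness and finite asymptotic dimension of $X$ are genuinely used. By contrast, the topological partition theorem, the inequality $\dim\le\operatorname{Ind}$, and the two inductions are standard once the dictionary is in place.
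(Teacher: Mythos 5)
Note first that the paper contains no proof of Proposition \ref{asdim}: it is quoted from Dranishnikov--Zarichnyi \cite{Uni}, so your attempt has to be judged against the argument in the cited literature (which also routes through the Higson corona, but not in the way you propose). Your second inequality, $\operatorname{asdim}X=\dim\nu X\le\operatorname{Ind}\nu X\le\operatorname{asInd}X$, is essentially sound, modulo one repair: it is \emph{not} true that every pair of disjoint closed subsets of $\nu X$ has the form $(\nu A,\nu B)$. Instead, given disjoint closed $F,G\subseteq\nu X$, use normality of $hX$ to pick open sets $\tilde U\supseteq F$, $\tilde V\supseteq G$ with disjoint closures and set $A=\tilde U\cap X$, $B=\tilde V\cap X$; then $F\subseteq\nu A$, $G\subseteq\nu B$, the sets $A,B$ are asymptotically disjoint by Corollary \ref{11setare}, and any asymptotic separator between $A$ and $B$ gives a topological separator between $F$ and $G$. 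For the induction you also need that the trace $\nu C=\bar{C}\cap\nu X$ is naturally homeomorphic to the Higson corona of the proper metric space $C$; this is true, but it is itself a lemma (one must extend Higson functions from $C$ to $X$), not a formality.

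The genuine gaps are both in the inequality $\operatorname{asInd}X\le\operatorname{asdim}X$. (a) Your realization lemma is false as stated: not every closed $F\subseteq\nu X$ equals $\nu C$. No singleton $\{p\}\subseteq\nu X$ is of this form, since $\nu C\neq\emptyset$ forces $C$ unbounded, and then $C$ contains two asymptotically disjoint unbounded subsets (split a sufficiently sparse sequence in $C$ into two subsequences), whose coronas are nonempty and disjoint by Corollary \ref{11setare}; hence $\nu C$ has at least two points. What is true --- and what costs real work in the cited proof --- is only the much weaker statement that between sets of the special form $\nu A,\nu B$ one can find separators of the form $\nu C$. (b) More fatally, the step ``since $\dim\nu X=n$, the partition theorem of covering dimension furnishes a separator $S$ with $\dim S\le n-1$'' is unjustified. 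That partition theorem is a theorem about \emph{metrizable} spaces. For a general compactum, the property that every pair of disjoint closed sets admits a partition of dimension $\le n-1$ is essentially the definition of $\operatorname{Ind}\le n$, and it does \emph{not} follow from $\dim\le n$: there are compacta (Lunc, Lokucievskii) with $\dim=1$ but $\operatorname{Ind}=2$. Since $\nu X$ is a badly non-metrizable compactum, what you are invoking amounts to $\operatorname{Ind}\nu X\le\dim\nu X$, which (combined with your other inequality) is essentially the statement being proven; the argument is circular at exactly its load-bearing point. A correct proof must supply this ingredient by other means: in the cited literature the inequality $\operatorname{asInd}X\le\operatorname{asdim}X$ is obtained by constructing, inside $X$ itself, asymptotic separators of asymptotic dimension $\le n-1$ out of uniformly bounded covers of multiplicity $n+1$ witnessing $\operatorname{asdim}X\le n$ --- a large-scale analogue of building partitions from covers --- rather than by applying topological dimension theory to $\nu X$.
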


\section{Large scale inductive dimension}

\begin{definition}\label{sep}
Let $(X,\lambda)$ be an AS.R. space and let $A$ and $B$ be two asymptotically disjoint subsets of $X$. We call a subset $C$ of $X$ a \emph{large scale separator} between $A$ and $B$ if\\
i) $C$ is asymptotically disjoint from $A$ and $B$.\\
ii) $X=X_{1}\bigcup X_{2}$ such that $X_{1}$ and $X_{2}$ are asymptotically disjoint from $A$ and $B$ respectively and if $L_{1}\lambda L_{2}$ for two unbounded subsets $L_{1}\subseteq X_{1}$ and $L_{2}\subseteq X_{2}$, then there exists a subset $L$ of $C$ such that $L\lambda L_{1}$.
\end{definition}
\begin{proposition}\label{setare}
Let $X$ be a normal topological space and let $\mathcal{E}$ be a proper coarse structure on $X$. Assume that the AS.R. associated to $\mathcal{E}$ is asymptotically normal. Then each large scale separator in $X$ is an asymptotic separator.
\end{proposition}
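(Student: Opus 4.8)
The plan is to carry out the whole argument inside the Higson compactification $hX$ and its corona $\nu X$, converting every instance of asymptotic disjointness into disjointness of coronas via Corollary \ref{11setare}, and then to reduce the statement to a single inclusion of closed subsets of $\nu X$. Throughout I write $\overline{S}$ for the closure in $hX$ and $\nu S=\overline{S}\cap\nu X$.

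First I would record the corona dictionary supplied by Corollary \ref{11setare} (whose hypotheses are exactly those of the present proposition). Since $A,B$ are asymptotically disjoint and, by condition (i) of Definition \ref{sep}, $C$ is asymptotically disjoint from both, we get $\nu A\cap\nu B=\nu C\cap\nu A=\nu C\cap\nu B=\emptyset$. From condition (ii), with $X_1$ asymptotically disjoint from $A$ and $X_2$ from $B$, we get $\nu X_1\cap\nu A=\nu X_2\cap\nu B=\emptyset$. As a finite union of closures is the closure of the union, $X=X_1\cup X_2$ gives $\nu X=\nu X_1\cup\nu X_2$, whence $\nu A\subseteq\nu X_2$ and $\nu B\subseteq\nu X_1$. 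Now set $U=(\nu X\setminus\nu X_1)\setminus\nu C$ and $V=(\nu X\setminus\nu X_2)\setminus\nu C$. These are open in $\nu X$ and disjoint, since $U\cap V=(\nu X\setminus(\nu X_1\cup\nu X_2))\setminus\nu C=\emptyset$; moreover $\nu A\subseteq U$ and $\nu B\subseteq V$ because of the containments above together with $\nu A\cap\nu C=\nu B\cap\nu C=\emptyset$. A direct computation gives $U\cup V=\nu X\setminus((\nu X_1\cap\nu X_2)\cup\nu C)$, so $U$ and $V$ exhibit $\nu C$ as a separator between $\nu A$ and $\nu B$ precisely when $\nu X_1\cap\nu X_2\subseteq\nu C$. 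Thus the proposition reduces to this one inclusion.

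To prove $\nu X_1\cap\nu X_2\subseteq\nu C$ I would argue by contradiction. Take $\xi\in\nu X_1\cap\nu X_2$ and suppose $\xi\notin\nu C$; as $\xi\in\nu X$ this means $\xi\notin\overline{C}$. Since $hX$ is compact Hausdorff, hence regular, choose a closed neighborhood $W$ of $\xi$ in $hX$ with $W\cap\overline{C}=\emptyset$, and put $S_1=X_1\cap W$, $S_2=X_2\cap W$. Because $\xi\in\overline{X_i}$ and $W$ is a neighborhood of $\xi$, every neighborhood of $\xi$ meets $X_i\cap W^{\circ}\subseteq S_i$, so $\xi\in\nu S_1\cap\nu S_2$; in particular $S_1,S_2$ are unbounded. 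By Corollary \ref{11setare}, $S_1$ and $S_2$ are not asymptotically disjoint, so by the definition of asymptotic disjointness there are unbounded $L_1\subseteq S_1\subseteq X_1$ and $L_2\subseteq S_2\subseteq X_2$ with $L_1\lambda L_2$. Condition (ii) of Definition \ref{sep} then furnishes $L\subseteq C$ with $L\lambda L_1$, and $L$ is unbounded. Since $L\lambda L_1$ with both sets unbounded, Corollary \ref{11setare} gives $\nu L\cap\nu L_1\neq\emptyset$; picking $\eta$ in this intersection, we have $\eta\in\nu L_1\subseteq\overline{S_1}\subseteq W$ and $\eta\in\nu L\subseteq\overline{C}$, contradicting $W\cap\overline{C}=\emptyset$. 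Hence $\xi\in\nu C$.

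The hard part is exactly this last paragraph: passing from an abstract corona point $\xi$ to honest unbounded subsets $L_1\subseteq X_1$, $L_2\subseteq X_2$ to which the resemblance hypothesis (ii) applies. The device that makes it work is to localize by a corona neighborhood $W$ of $\xi$ avoiding $\overline{C}$ and then use the two directions of Corollary \ref{11setare}: once to extract the asymptotically alike pair $L_1,L_2$ from the non-disjointness of $S_1,S_2$ at $\xi$, and once to relocate the corona of the resulting $L\subseteq C$ back inside $W$, forcing a contradiction. Everything before this step is routine point-set topology in $hX$. With $\nu X_1\cap\nu X_2\subseteq\nu C$ in hand, $U$ and $V$ display $\nu C$ as a separator between $\nu A$ and $\nu B$ in $\nu X$, so $C$ is an asymptotic separator, which completes the proof.
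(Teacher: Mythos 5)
Your proof is correct, and its skeleton coincides with the paper's up to the key step: you take the same sets $U=(\nu X\setminus\nu X_{1})\setminus\nu C$ and $V=(\nu X\setminus\nu X_{2})\setminus\nu C$, make the same routine verifications from Corollary \ref{11setare}, reduce the proposition to the single inclusion $\nu X_{1}\cap\nu X_{2}\subseteq\nu C$, and invoke condition (ii) of Definition \ref{sep} at exactly the same moment to convert an asymptotically alike pair $L_{1}\subseteq X_{1}$, $L_{2}\subseteq X_{2}$ into some $L\subseteq C$ with $L\lambda L_{1}$. Where you genuinely differ is in how that inclusion is proved. The paper identifies a point of $\nu X_{1}\cap\nu X_{2}$ with a cluster $\mathcal{C}$ (via the identification of $hX$ with the asymptotic/Smirnov compactification established earlier in the paper): for $D\in\mathcal{C}$ it splits $D=(D\cap X_{1})\cup(D\cap X_{2})$, uses the cluster axioms to place, say, $D\cap X_{1}$ in $\mathcal{C}$, extracts unbounded asymptotically alike subsets of $D\cap X_{1}$ and $X_{2}$ using compatibility of the AS.R.\ with the topology, obtains $L\subseteq C$, and concludes that $C$ is proximal to every member of $\mathcal{C}$, so the absorption axiom for clusters yields $C\in\mathcal{C}$, i.e.\ $\mathcal{C}\in\nu C$ --- a direct, positive argument. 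You instead argue by contradiction using only point-set topology in the compact Hausdorff space $hX$: regularity produces a closed neighborhood $W$ of $\xi$ missing $\overline{C}$, you localize to $S_{i}=X_{i}\cap W$, and you apply Corollary \ref{11setare} in both directions --- once to extract the alike pair $(L_{1},L_{2})$ from $\xi\in\nu S_{1}\cap\nu S_{2}$, and once to force a corona point of $L\subseteq C$ into $W$, contradicting $W\cap\overline{C}=\emptyset$. Your route buys independence from the cluster/proximity machinery, with Corollary \ref{11setare} as the sole interface to the corona, which makes the argument more self-contained and more transparently topological; the paper's route is shorter once the cluster description of corona points is in hand and avoids proof by contradiction. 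One small point you assert without justification --- that $L$ is unbounded --- does need a word: $L\neq\emptyset$ because, by Proposition \ref{6setare}, the empty set cannot be asymptotically alike to a nonempty set, and a bounded nonempty $L$ with $L\lambda L_{1}$ would force $L_{1}$ to be bounded by transitivity; this is easily supplied and does not affect the argument.
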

\begin{proof}
Let $C$ be a large scale separator between two asymptotically disjoint subsets $A$ and $B$ of $X$. Assume that $X=X_{1}\bigcup X_{2}$ such that $X_{1}$ and $X_{2}$ satisfy ii) of \ref{sep}. Let $ U=(\nu X \setminus \nu X_{1})\bigcap (\nu X\setminus \nu C)$ and $V=(\nu X \setminus \nu X_{2})\bigcap (\nu X\setminus \nu C)$. By \ref{11setare}, $\nu A\subseteq U$ and $\nu B\subseteq V$. Let $\mathcal{C}$ be a cluster in $\nu X_{1}\bigcap \nu X_{2}$ and $D\in \mathcal{C}$. Since $D=(D\bigcap X_{1})\bigcup (D\bigcap X_{2})$, $D\bigcap X_{1}\in \mathcal{C}$ or $D\bigcap X_{2}\in \mathcal{C}$. Suppose that $D\bigcap X_{1}\in \mathcal{C}$. Since both  $D\bigcap X_{1}$ and $X_{2}$ are in $\mathcal{C}$, their closure are not asymptotically disjoint. Since $\lambda_{\mathcal{E}}$ and the topology of $X$ are compatible, there are unbounded subsets $L_{1}\subseteq D\bigcap X_{1}$ and $L_{2}\subseteq X_{2}$ such that $L_{1}$ and $L_{2}$ are asymptotically alike. By ii) of \ref{sep}, there exists a subset $L$ of $C$ such that $L\lambda_{\mathcal{E}} L_{1}$. So $C$ and $D$ are not asymptotically disjoint and it shows that $C\in \mathcal{C}$. Thus $\nu X_{1}\bigcap \nu X_{2}\subseteq \nu C$. Therefore $U\bigcup V=\nu X\setminus \nu C$. Since $\nu X_{1}\bigcup \nu X_{2}=\nu X$, $U\bigcap V=\emptyset$.
\end{proof}

\begin{definition}
Let $(X,\lambda)$ be an AS.R. space. By the definition the \emph{large scale inductive dimension} of $X$ is $-1$ if and only if $X$ is bounded. We say that the large scale inductive dimension of $X$ is less that $n\in \mathbb{N}$ if for each asymptotically disjoint subsets $A$ and $B$ of $X$ there is a large scale separator $C$ between them such that the large scale inductive dimension of $(C,\lambda_{C})$ is less than $n-1$. We will denote the large scale inductive dimension of $(X,\lambda)$ by $\operatorname{lsInd}_{\lambda}X$. We say $\operatorname{lsInd}_{\lambda}X=n$ if $\operatorname{lsInd}_{\lambda}X\leq n$ and $\operatorname{lsInd}_{\lambda}X\leq n-1$ is not true. We say that the large scale inductive dimension of $(X,\lambda)$ is infinite, if $\operatorname{lsInd}X\leq n$ is not true for any $n\in \mathbb{N}$.
\end{definition}
The proposition \ref{setare}, shows that for a proper metric space $(X,d)$ we have $\operatorname{asInd}X\leq \operatorname{lsInd}_{\lambda_{d}}X$.
\begin{proposition}
Let $(X,d)$ be a proper metric space. Then $\operatorname{asInd}X=0$ if and only if $\operatorname{lsInd}_{\lambda_{d}}X=0$.
\end{proposition}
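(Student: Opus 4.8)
The plan is to prove the two implications separately, translating each notion of a \emph{bounded} separator into a statement about the Higson corona $\nu X$. Since $(X,d)$ is a proper metric space, its metric coarse structure is proper and $\lambda_{d}$ is asymptotically normal, so Proposition \ref{setare} and Corollary \ref{11setare} apply; in particular, two subsets of $X$ are asymptotically disjoint exactly when their traces on $\nu X$ are disjoint. The direction $\operatorname{lsInd}_{\lambda_{d}}X=0\Rightarrow\operatorname{asInd}X=0$ is then immediate: by the remark following Proposition \ref{setare} one has $\operatorname{asInd}X\leq\operatorname{lsInd}_{\lambda_{d}}X=0$, and since $\operatorname{lsInd}_{\lambda_{d}}X=0$ forces $X$ to be unbounded we cannot have $\operatorname{asInd}X=-1$, whence $\operatorname{asInd}X=0$.

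For the converse assume $\operatorname{asInd}X=0$ and fix asymptotically disjoint $A,B$. First I would unwind both definitions in the bounded case. A bounded set $C$ satisfies $\nu C=\emptyset$, so a bounded asymptotic separator between $A$ and $B$ is nothing but a splitting $\nu X=U\cup V$ into disjoint open --- hence clopen --- sets with $\nu A\subseteq U$ and $\nu B\subseteq V$; such a splitting exists by hypothesis. On the other hand, choosing $C=\emptyset$ in Definition \ref{sep}, condition ii) forces that no unbounded $L_{1}\subseteq X_{1}$ be asymptotically alike to a subset of the bounded set $C$, so the crossing clause is vacuous and a bounded large scale separator is simply a decomposition $X=X_{1}\cup X_{2}$ in which $X_{1},X_{2}$ are asymptotically disjoint from $A,B$ respectively and from each other. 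By Corollary \ref{11setare} this amounts to finding $X=X_{1}\cup X_{2}$ with $\nu X_{1}\subseteq V$, $\nu X_{2}\subseteq U$ and $\nu X_{1}\cap\nu X_{2}=\emptyset$. Thus the whole problem reduces to realizing the clopen sets $U,V$ as coronas of complementary subsets of $X$.

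The heart of the proof is therefore a realization lemma: for every clopen $U\subseteq\nu X$ there is $Y\subseteq X$ with $\nu Y=U$ and $\nu(X\setminus Y)=\nu X\setminus U$. I would prove it by extending the continuous characteristic function $\chi_{U}\colon\nu X\to\{0,1\}$, via Tietze's theorem on the compact Hausdorff space $hX$, to a continuous $f\colon hX\to[0,1]$ (its restriction to $X$ being a Higson function), and setting $Y=\{x\in X:f(x)\geq\tfrac12\}$. Using density of $X$ in $hX$ and continuity of $f$, a corona point lies in $\overline{Y}$ iff $f\geq\tfrac12$ there, i.e.\ iff $f=1$ there because $f|_{\nu X}\in\{0,1\}$; this gives $\nu Y=U$, and symmetrically $\nu(X\setminus Y)=V$. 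Applying this to $U$ and putting $X_{2}=Y$, $X_{1}=X\setminus Y$ yields $X=X_{1}\cup X_{2}$ with $\nu X_{2}=U\supseteq\nu A$ and $\nu X_{1}=V\supseteq\nu B$ disjoint; by Corollary \ref{11setare} the pieces $X_{1},X_{2}$ are asymptotically disjoint, and $X_{1}$ (resp.\ $X_{2}$) is asymptotically disjoint from $A$ (resp.\ $B$). Hence $C=\emptyset$ is a bounded large scale separator between $A$ and $B$, so $\operatorname{lsInd}_{\lambda_{d}}X\leq0$, and as $X$ is unbounded, $\operatorname{lsInd}_{\lambda_{d}}X=0$.

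I expect the only genuinely nontrivial step to be the realization lemma, specifically pinning down $\overline{Y}\cap\nu X$ from both sides; everything else is bookkeeping with coronas. The two ingredients that make that step work are the compatibility of $\lambda_{d}$ with the topology of $X$ (so that asymptotic behavior is faithfully recorded by closures in $hX$) together with the value constraint $f|_{\nu X}\in\{0,1\}$ coming from clopenness of $U$.
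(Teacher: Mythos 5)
Your proposal is correct and takes essentially the same route as the paper: the paper likewise reduces the problem to the clopen splitting $\nu X=U\cup V$ coming from a bounded asymptotic separator, produces a continuous $f\colon hX\to[0,1]$ separating $U$ and $V$ by Urysohn's lemma (your Tietze extension of $\chi_{U}$ is the same device), and splits $X$ at the level $\tfrac12$ into $X_{1}=f^{-1}([\tfrac12,1])\cap X$ and $X_{2}=f^{-1}([0,\tfrac12])\cap X$, whose coronas are disjoint and miss $\nu A$, $\nu B$ respectively, so that the empty set is a bounded large scale separator. Your packaging of this as an exact realization lemma ($\nu Y=U$, $\nu(X\setminus Y)=\nu X\setminus U$) is marginally stronger than the inclusions the paper actually verifies, but the underlying mechanism is identical.
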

\begin{proof}
If $\operatorname{lsInd}_{\lambda_{d}}X=0$, then $X$ is unbounded. Since $\operatorname{asInd}X\leq \operatorname{lsInd}_{\lambda_{d}}X$, $\operatorname{asInd}X=0$. To prove the converse, suppose that $\operatorname{asInd}X=0$ and $A$ and $B$ are two asymptotically disjoint subsets of $X$. Let $\nu X=U\bigcup V$ such that $U$ and $V$ are two open and disjoint subsets of $\nu X$ containing $\nu A$ and $\nu B$ respectively. So $U$ and $V$ are closed in $hX$. There exists a continuous function $f:hX\rightarrow [0,1]$ such that $f(U)=0$ and $f(V)=1$. Let $X_{1}=f^{-1}([\frac{1}{2},1])\bigcap X$ and $X_{2}=f^{-1}([0,\frac{1}{2}])\bigcap X$. Then $X_{1}$ and $X_{2}$ are two asymptotically disjoint subsets of $X$ such that $X=X_{1}\bigcup X_{2}$. Also $X_{1}$ and $X_{2}$ are asymptotically disjoint from $A$ and $B$ respectively.
\end{proof}
\begin{lemma}\label{dis}
Let $(X,\lambda)$ and $(Y,\lambda^{\prime})$ be two AS.R. spaces and let $f:X\rightarrow Y$ be an AS.R. mapping.\\
 i) Assume that $f$ is an asymptotic equivalence. If $A,B\subseteq X$ are two asymptotically disjoint subsets of $X$ then $f(A)$ and $f(B)$ are asymptotically disjoint subsets of $Y$.\\
 ii) If $C$ and $D$ are asymptotically disjoint subsets of $Y$, then $f^{-1}(C)$ and $f^{-1}(D)$ are two asymptotically disjoint subsets of $X$.
\end{lemma}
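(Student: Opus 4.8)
The plan is to prove both statements by contradiction: in each case a hypothetical failure of asymptotic disjointness supplies a pair of \emph{unbounded} and asymptotically alike subsets, and I transport this pair through $f$ (respectively its quasi-inverse) to contradict the asymptotic disjointness assumed in the hypothesis. Two auxiliary facts will do the heavy lifting. First, every nonempty subset of a bounded set is bounded: if $A\lambda x$ and $A_{1}\subseteq A$ is nonempty, then by Proposition \ref{6setare} there is a nonempty $B_{1}\subseteq\{x\}$ with $A_{1}\lambda B_{1}$, forcing $B_{1}=\{x\}$ and hence $A_{1}\lambda x$. Second, the image of a bounded set under an AS.R. mapping is bounded: applying condition ii) of Definition \ref{map} to $A\lambda x$ gives $f(A)\lambda' f(\{x\})$, and $f(\{x\})$ is a single point.

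For part ii), suppose $f^{-1}(C)$ and $f^{-1}(D)$ are not asymptotically disjoint, so there are unbounded $L_{1}\subseteq f^{-1}(C)$ and $L_{2}\subseteq f^{-1}(D)$ with $L_{1}\lambda L_{2}$. Condition ii) of Definition \ref{map} gives $f(L_{1})\lambda' f(L_{2})$, and plainly $f(L_{1})\subseteq C$, $f(L_{2})\subseteq D$. The only thing to check is that $f(L_{1})$ and $f(L_{2})$ are unbounded, and here I invoke properness: were $f(L_{1})$ bounded, then $f^{-1}(f(L_{1}))$ would be bounded, and since $L_{1}\subseteq f^{-1}(f(L_{1}))$ the first auxiliary fact would make $L_{1}$ bounded, a contradiction. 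Thus $f(L_{1})\subseteq C$ and $f(L_{2})\subseteq D$ are unbounded and asymptotically alike, contradicting the asymptotic disjointness of $C$ and $D$.

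For part i), let $g:Y\rightarrow X$ be an AS.R. mapping with $g\circ f$ close to $\id_{X}$, and suppose $f(A)$ and $f(B)$ are not asymptotically disjoint. Choose unbounded $M_{1}\subseteq f(A)$ and $M_{2}\subseteq f(B)$ with $M_{1}\lambda' M_{2}$, and put $A_{1}=f^{-1}(M_{1})\cap A$ and $B_{1}=f^{-1}(M_{2})\cap B$; since $M_{1}\subseteq f(A)$ and $M_{2}\subseteq f(B)$ we get $f(A_{1})=M_{1}$ and $f(B_{1})=M_{2}$. By the second auxiliary fact $A_{1}$ and $B_{1}$ must be unbounded, for otherwise their images $M_{1},M_{2}$ would be bounded. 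Applying $g$ to $M_{1}\lambda' M_{2}$ yields $g(f(A_{1}))\lambda g(f(B_{1}))$, while closeness of $g\circ f$ to $\id_{X}$ gives $A_{1}\lambda g(f(A_{1}))$ and $B_{1}\lambda g(f(B_{1}))$; transitivity of $\lambda$ then gives $A_{1}\lambda B_{1}$. So $A_{1}\subseteq A$ and $B_{1}\subseteq B$ are unbounded and asymptotically alike, contradicting the asymptotic disjointness of $A$ and $B$.

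The bookkeeping with images and preimages is routine; the substantive point is the preservation of unboundedness, and I expect this to be the step demanding care. It is also where the difference between the two parts becomes visible: part ii) needs only properness of $f$, whereas part i) genuinely needs $f$ to be an \emph{equivalence}, since without the quasi-inverse $g$ one has no way to carry the relation $M_{1}\lambda' M_{2}$ back into $X$, and the closeness relation $g\circ f\sim\id_{X}$ is precisely what relinks the transported sets to the original $A$ and $B$.
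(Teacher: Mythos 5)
Your proof is correct. In part ii) (which the paper dismisses as ``straightforward'') you spell out exactly the mechanism the paper's opening remark relies on: $f$ preserves unboundedness because of properness together with the fact that a nonempty subset of a bounded set is bounded, which you correctly derive from Proposition \ref{6setare}; the paper uses this subset fact silently, and its opening remark even cites property ii) of Definition \ref{map} where properness, property i), is what is actually needed, so your version is the more careful one. In part i) your overall strategy coincides with the paper's, namely proving the contrapositive by transporting the failure of asymptotic disjointness through the quasi-inverse $g$ and reconnecting to $A$ and $B$ via closeness of $g\circ f$ to $\id_{X}$, but the final bookkeeping differs. The paper applies $g$ to the witnesses $L_{1}\subseteq f(A)$, $L_{2}\subseteq f(B)$, obtaining unbounded alike subsets of $g(f(A))$ and $g(f(B))$, and must then invoke Proposition \ref{6setare} together with $g(f(A))\lambda A$ and $g(f(B))\lambda B$ to manufacture alike unbounded subsets of $A$ and $B$ themselves (and one still has to check that the subsets produced by Proposition \ref{6setare} are unbounded, a point the paper glosses over). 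You instead realize the witnesses inside $A$ and $B$ from the outset, as $A_{1}=f^{-1}(M_{1})\cap A$ and $B_{1}=f^{-1}(M_{2})\cap B$, so that closeness applies to them directly and only transitivity of $\lambda$ is needed to conclude $A_{1}\lambda B_{1}$; the price is the verification that $f(A_{1})=M_{1}$, $f(B_{1})=M_{2}$ and that $A_{1}$, $B_{1}$ are unbounded, which your second auxiliary fact handles. The two routes are of comparable length; yours trades the paper's second appeal to Proposition \ref{6setare} for a preimage construction, and has the advantage of making every unboundedness claim explicit.
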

\begin{proof}
Notice that for an unbounded subset $L$ of $X$ since $L\subseteq f^{-1}(f(L))$, the property ii) of \ref{map} shows that $f(L)$ is unbounded.\\
i) Let $g:Y\rightarrow X$ be an AS.R. mapping such that $g\circ f(C)\lambda C$ and $f\circ g(D)\lambda^{\prime}D$ for all $C\subseteq X$ and $D\subseteq Y$. Assume that, contrary to our claim, there are unbounded subsets $L_{1}\subseteq f(A)$ and $L_{2}\subseteq f(B)$ such that $L_{1}\lambda^{\prime}L_{2}$. So $g(L_{1})\lambda g(L_{2})$. Since $g(L_{1})\subseteq g(f(A))$ and $g(L_{2})\subseteq g(f(B))$, $g(f(A))$ and $g(f(B))$ are not asymptotically disjoint. Thus by proposition \ref{6setare}, $A$ and $B$ are not asymptotically disjoint.\\
ii) The proof is straightforward.
\end{proof}
\begin{theorem}
Assume that $(X,\lambda)$ and $(Y,\lambda^{\prime})$ are two asymptotically equivalent AS.R. spaces. Then $\operatorname{lsInd}_{\lambda}X=\operatorname{lsInd}_{\lambda^{\prime}}Y$.
\end{theorem}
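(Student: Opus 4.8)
The plan is to prove by induction on $n\in\{-1\}\cup\mathbb{N}\cup\{0\}$ the statement that for any two asymptotically equivalent AS.R. spaces one has $\operatorname{lsInd}_{\lambda}X\leq n$ if and only if $\operatorname{lsInd}_{\lambda'}Y\leq n$; these equivalences for every $n$ then give $\operatorname{lsInd}_{\lambda}X=\operatorname{lsInd}_{\lambda'}Y$, the infinite case included. Fix an asymptotic equivalence $f:X\to Y$ with an AS.R. mapping $g:Y\to X$ such that $g\circ f$ and $f\circ g$ are close to the respective identity maps. Since the roles of $f$ and $g$ are interchangeable, by symmetry it suffices to prove only the implication $\operatorname{lsInd}_{\lambda}X\leq n\Rightarrow\operatorname{lsInd}_{\lambda'}Y\leq n$ at each stage.

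First I would record two auxiliary facts. (1) \emph{Asymptotic disjointness is invariant under $\lambda$}: if $A$ and $B$ are asymptotically disjoint and $A\lambda A'$, then $A'$ and $B$ are asymptotically disjoint. This follows from Proposition \ref{6setare} together with transitivity of $\lambda$ and the observation that a set asymptotically alike to an unbounded set is unbounded. As a consequence boundedness is an asymptotic-equivalence invariant: every subset of a bounded space is bounded (again by Proposition \ref{6setare}), so if $X$ is bounded then $g(Y)\subseteq X$ is bounded, say $g(Y)\lambda x_{1}$; applying $f$ gives $f(g(Y))\lambda' f(x_{1})$, so $f(g(Y))$ is bounded, and since $f\circ g$ is close to $\id_{Y}$ we get $Y\lambda' f(g(Y))$, whence $Y$ is bounded. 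This settles the base case $n=-1$. (2) If $C$ is a large scale separator between $A$ and $B$ and $A\lambda A'$, $B\lambda B'$, then $C$ is a large scale separator between $A'$ and $B'$: the resemblance clause in ii) of Definition \ref{sep} never mentions $A,B$, while condition i) and the disjointness of $X_{1},X_{2}$ from $A,B$ persist by fact (1).

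The heart of the argument is the transport lemma I would isolate: \emph{an asymptotic equivalence $f$ sends any large scale separator $C$ between asymptotically disjoint $A,B\subseteq X$ to a large scale separator $f(C)$ between $f(A)$ and $f(B)$.} Condition i) of Definition \ref{sep} for $f(C)$ is exactly Lemma \ref{dis}(i). For condition ii) the subtle point — and the main obstacle — is that $f$ need not be onto, so $f(X_{1})\cup f(X_{2})=f(X)$ may fail to cover $Y$. I would circumvent this by using the cover $Y=Y_{1}\cup Y_{2}$ with $Y_{i}=g^{-1}(X_{i})$, which is a genuine cover because $X=X_{1}\cup X_{2}$. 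To see $Y_{1}$ is asymptotically disjoint from $f(A)$, suppose unbounded $M\subseteq Y_{1}$ and $N\subseteq f(A)$ satisfy $M\lambda' N$; applying $g$ yields unbounded $g(M)\subseteq X_{1}$ and $g(N)\subseteq g(f(A))$ with $g(M)\lambda g(N)$, contradicting that $X_{1}$ is asymptotically disjoint from $A$, hence from $g(f(A))$ (as $g\circ f$ is close to $\id_{X}$, via fact (1)). For the resemblance clause, given unbounded $L_{1}'\subseteq Y_{1}$, $L_{2}'\subseteq Y_{2}$ with $L_{1}'\lambda' L_{2}'$, apply $g$ to obtain unbounded $g(L_{1}')\subseteq X_{1}$, $g(L_{2}')\subseteq X_{2}$ with $g(L_{1}')\lambda g(L_{2}')$; condition ii) for $C$ supplies $L\subseteq C$ with $L\lambda g(L_{1}')$, and then $f(L)\subseteq f(C)$ satisfies $f(L)\lambda' f(g(L_{1}'))\lambda' L_{1}'$, so $f(L)$ is the required subset. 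Throughout I use that the proper AS.R. mappings $f,g$ carry unbounded sets to unbounded sets, as noted in the proof of Lemma \ref{dis}.

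Finally I would run the induction. Assuming the equivalence of the two inequalities at level $n-1$ for all pairs, let $C',D'\subseteq Y$ be asymptotically disjoint. Then $g(C'),g(D')$ are asymptotically disjoint in $X$ by Lemma \ref{dis}(i), so $\operatorname{lsInd}_{\lambda}X\leq n$ furnishes a large scale separator $S$ between them with $\operatorname{lsInd}_{\lambda_{S}}S\leq n-1$. By the transport lemma $f(S)$ is a large scale separator between $f(g(C'))$ and $f(g(D'))$, and since $f(g(C'))\lambda' C'$ and $f(g(D'))\lambda' D'$, fact (2) makes $f(S)$ a large scale separator between $C'$ and $D'$. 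By Proposition \ref{sub}, $f\mid_{S}:(S,\lambda_{S})\to(f(S),\lambda'_{f(S)})$ is an asymptotic equivalence, so the induction hypothesis gives $\operatorname{lsInd}_{\lambda'_{f(S)}}f(S)=\operatorname{lsInd}_{\lambda_{S}}S\leq n-1$. Hence $f(S)$ witnesses $\operatorname{lsInd}_{\lambda'}Y\leq n$, completing the induction and the proof.
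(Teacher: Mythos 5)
Your proposal is correct and is essentially the paper's own argument: both proofs transport the separator through the coarse inverse of the equivalence (your $f(S)$ versus the paper's $D=g(C)$), take the cover by preimages under the map going the other way (your $Y_{i}=g^{-1}(X_{i})$ versus the paper's $X_{i}=f^{-1}(Y_{i})$), verify the resemblance clause by applying the maps and using closeness to the identity, and control the dimension of the transported separator via Proposition \ref{sub} together with the induction hypothesis. The only differences are organizational — you run the induction as an equivalence at each level $n$ with an explicit appeal to symmetry and isolate the transport step and the invariance facts as lemmas, whereas the paper inducts on the value of $\operatorname{lsInd}_{\lambda^{\prime}}Y$ and verifies the same points inline.
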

\begin{proof}
Let $f:X\rightarrow Y$ and $g:Y\rightarrow X$ be two AS.R. mappings such that $g\circ f(A)\lambda A$ for all $A\subseteq X$ and $f\circ g(B)\lambda^{\prime}B$ for all $B\subseteq Y$. We proceed by induction on $\operatorname{lsInd}_{\lambda^{\prime}}Y$. If $\operatorname{lsInd}_{\lambda^{\prime}}Y=-1$ then $Y$ is bounded and since $f$ is an AS.R. mapping $X=f^{-1}(Y)$ is bounded too. So $\operatorname{lsInd}_{\lambda}X=-1$. Assume that $\operatorname{lsInd}_{\lambda}X=\operatorname{lsInd}_{\lambda^{\prime}}Y$ for $\operatorname{lsInd}_{\lambda^{\prime}}Y=-1,...,n-1$. Let $\operatorname{lsInd}_{\lambda^{\prime}}Y=n$. Suppose that $A$ and $B$ are two asymptotically disjoint subsets of $X$. By \ref{dis}, $f(A)$ and $f(B)$ are asymptotically disjoint. Thus there is a large scale separator $C\subseteq Y$ between $f(A)$ and $f(B)$ such that $\operatorname{lsInd}_{\lambda^{\prime}_{C}}C\leq n-1$. There are $Y_{1},Y_{2}\subseteq Y$ such that $Y=Y_{1}\bigcup Y_{2}$ and they are asymptotically disjoint from $f(A)$ and $f(B)$ respectively. Also, if for $M_{1}\subseteq Y_{1}$ and $M_{2}\subseteq Y_{2}$ we have $M_{1}\lambda^{\prime}M_{2}$, then there exists some $M\subseteq C$ such that $M\lambda^{\prime}M_{1}$. Let $X_{1}=f^{-1}(Y_{1})$ and $X_{2}=f^{-1}(Y_{2})$. By \ref{dis}, $X_{1}$ and $X_{2}$ are asymptotically disjoint from $A$ and $B$ respectively. Let $D=g(C)$. Since $f(D)=f(g(C))\lambda^{\prime} C$ and $D\subseteq f^{-1}(f(D))$, \ref{dis} shows that $D$ is asymptotically disjoint from both $A$ and $B$. Assume that $L_{1}\subseteq X_{1}$ and $L_{2}\subseteq X_{2}$ and $L_{1}\lambda L_{2}$. Since $f(L_{1})\lambda f(L_{2})$, there exists a subset $L$ of $C$ such that $L\lambda^{\prime} f(L_{1})$. We have $g(L)\lambda g(f(L_{1}))$ and $g(f(L_{1}))\lambda L_{1}$, so $g(L)\lambda L_{1}$. By \ref{sub}, $D$ and $C$ are asymptotic equivalent, thus $\operatorname{lsInd}_{\lambda_{D}}D\leq n-1$.
\end{proof}

\section{Comparing the asymptotic dimensiongrad and the large scale inductive dimension of metric spaces}

Let $(X,d)$ be a metric space. For a subset $A$ of $X$ and $r>0$, by $\textbf{B}(A,r)$ we mean the open ball of radius $r$ around $A$.
\begin{definition}
Let $X$ be a metric space. For two subsets $A$ and $B$ of $X$, an $r$-\emph{chain} joining $A$ and $B$ is a finite sequence $x_{0},...,x_{n}$ such that $x_{0}\in A$ and $x_{n}\in B$ and $d(x_{i-1},x_{i})<r$ for $i=1,..,n$. Let $A$ and $B$ be two asymptotically disjoint subsets of $X$, we say $C$ is an \emph{asymptotic cut} between $A$ and $B$ if\\
i) It is asymptotically disjoint from both $A$ and $B$.\\
ii) For each $r>0$ there exists a positive real number $s$ such that each $r$-chain joining $A$ and $B$ meets $\textbf{B}(C,s)$.
\end{definition}
If we in the definition of large scale inductive dimension, we replace the large scale separator with the asymptotic cut, we have the definition of \emph{asymptotic dimensiongrad} of a metric space. For a metric space $X$ we denote the asymptotic dimensiongrad of $X$ by $\operatorname{asDg}X$. It can be proved that the asymptotic dimensiongrad is invariant under coarse equivalences.
\begin{proposition}\label{cts1}
Let $(X,d)$ be a metric space. Then each large scale separator in $(X,\lambda_{d})$ is an asymptotic cut.
\end{proposition}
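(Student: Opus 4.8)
The plan is to argue by contradiction, using that condition (i) in the two definitions is literally the same, so only the chain‑cutting condition (ii) of an asymptotic cut needs proof. Suppose $C$ is a large scale separator between the asymptotically disjoint sets $A$ and $B$ but fails to be an asymptotic cut. Then there is an $r>0$ for which no admissible $s$ exists, so for every $n\in\mathbb{N}$ I can choose an $r$-chain $\gamma_{n}=x_{0}^{n},\dots,x_{k_{n}}^{n}$ joining $A$ and $B$ that misses $\textbf{B}(C,n)$; in particular $d(x_{i}^{n},C)\geq n$ for every vertex of $\gamma_{n}$. Fix the decomposition $X=X_{1}\bigcup X_{2}$ supplied by condition (ii) of Definition \ref{sep}. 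First I would record what that condition says for $\lambda_{d}$: for $L\subseteq C$, the relation $L\,\lambda_{d}\,L_{1}$ means $d_{H}(L,L_{1})<\infty$, which is equivalent to $L_{1}\subseteq\textbf{B}(C,t)$ for some finite $t$. Hence condition (ii) reformulates as: \emph{every pair of unbounded $L_{1}\subseteq X_{1}$ and $L_{2}\subseteq X_{2}$ with $d_{H}(L_{1},L_{2})<\infty$ satisfies $\sup_{x\in L_{1}}d(x,C)<\infty$.} This is the statement I intend to violate.

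Next I would locate a crossing on each chain. Label each vertex by $1$ if $x_{i}^{n}\in X_{1}$ and by $2$ otherwise (so label $2$ forces $x_{i}^{n}\in X_{2}$). Either the label is constant along $\gamma_{n}$ or it changes at some consecutive pair. In the changing case I extract adjacent vertices $p_{n}\in X_{1}$ and $q_{n}\in X_{2}$ with $d(p_{n},q_{n})<r$. In the constant case the whole chain lies in a single $X_{j}$; then the endpoint lying in $A$ (when $j=1$) or in $B$ (when $j=2$), taken over the relevant $n$, produces an unbounded subset of $A$ contained in $X_{1}$, contradicting the asymptotic disjointness of $X_{1}$ and $A$, or symmetrically one contradicting the asymptotic disjointness of $X_{2}$ and $B$. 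By pigeonhole one of the three situations recurs for infinitely many $n$; the two constant situations are eliminated immediately, so I may assume crossings $p_{n}\in X_{1}$, $q_{n}\in X_{2}$ with $d(p_{n},q_{n})<r$ for infinitely many $n$.

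Finally I would assemble the contradiction. Set $L_{1}=\{p_{n}\}$ and $L_{2}=\{q_{n}\}$. Since $d(p_{n},q_{n})<r$ for all $n$, we get $d_{H}(L_{1},L_{2})\leq r$, so $L_{1}\,\lambda_{d}\,L_{2}$. Because $d(p_{n},C)\geq n$, fixing a point $c_{0}\in C$ yields $d(p_{n},c_{0})\geq n\to\infty$, whence $L_{1}$ (and therefore $L_{2}$) is unbounded, while at the same time $\sup_{n}d(p_{n},C)=\infty$. This is precisely the negation of the reformulated condition (ii), giving the desired contradiction and showing that (ii) of the asymptotic cut must hold.

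I expect the main obstacle to be controlling the location of the crossings: condition (ii) of a large scale separator only restricts \emph{unbounded} subsets, so the whole argument collapses unless the chosen crossing points leave every bounded set. The avoidance property $d(x_{i}^{n},C)\geq n$ is exactly what forces this escape to infinity, and it does so provided $C\neq\emptyset$; when $C=\emptyset$ the reformulated condition (ii) already forces $X_{1}$ and $X_{2}$ to be asymptotically disjoint, and that degenerate case is handled directly. A secondary delicate point is the monochromatic-chain case, where the escape to infinity of the chain's endpoint in $A$ or in $B$ must be combined with the asymptotic disjointness of $X_{1}$ from $A$ and of $X_{2}$ from $B$ to close the argument.
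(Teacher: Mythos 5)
Your main argument is correct, and it runs on the same engine as the paper's proof --- adjacent chain vertices $p_{n}\in X_{1}$, $q_{n}\in X_{2}$ with $d(p_{n},q_{n})<r$ witness the hypothesis of condition (ii) of Definition \ref{sep} --- but the packaging is genuinely different. The paper argues directly: it first arranges (``without loss of generality'') that $A\subseteq X_{1}$ and $B\subseteq X_{2}$, then collects into $L_{1}$ and $L_{2}$ \emph{all} crossing vertices of \emph{all} $r$-chains joining $A$ and $B$, notes $d_{H}(L_{1},L_{2})\leq r$, applies (ii) once to get a single $s$ with $L_{1}\subseteq \textbf{B}(C,s)$, and concludes since every chain must cross. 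Your contradiction-plus-pigeonhole version extracts one crossing per avoiding chain $\gamma_{n}$ instead. What your route buys is precisely the two points the paper glosses over: condition (ii) applies only to \emph{unbounded} $L_{1},L_{2}$, and your observation that $d(x_{i}^{n},C)\geq n$ forces the extracted points to escape to infinity supplies exactly the unboundedness that the paper never verifies for its aggregated crossing sets (if those are bounded one needs a separate, easy argument using $C\neq\emptyset$); and you work with the disjointness hypotheses exactly as stated ($X_{1}$ asymptotically disjoint from $A$, $X_{2}$ from $B$), disposing of monochromatic chains with them, rather than invoking the paper's unjustified relabelling $A\subseteq X_{1}$, $B\subseteq X_{2}$ (which as written even conflicts with Definition \ref{sep}, since $X_{1}$ asymptotically disjoint from $A$ forces $A\cap X_{1}$ to be bounded). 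The cost is only that the paper's direct version produces the uniform $s$ in one stroke.

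The one genuine soft spot is your closing claim that the case $C=\emptyset$ ``is handled directly.'' It cannot be: with the definitions taken literally, the statement is false there. Take $X\subseteq\mathbb{R}^{2}$ to be the union of the two coordinate half-axes together with the origin, $A$ the vertical half-axis, $B$ the horizontal one, $X_{1}=B\cup\{(0,0)\}$, $X_{2}=A$. Then $C=\emptyset$ satisfies (i) and (ii) of Definition \ref{sep} (the crossing condition is vacuous because $X_{1}$ and $X_{2}$ are asymptotically disjoint), yet the two-point chain $(0,1),(1,0)$ is a $2$-chain joining $A$ and $B$ and meets no $\textbf{B}(\emptyset,s)=\emptyset$, so $\emptyset$ is not an asymptotic cut. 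Thus the proposition only holds for $C\neq\emptyset$ (or with nonemptiness built into the definition of a large scale separator). This defect belongs to the proposition itself and equally undermines the paper's own proof, which assumes $C\neq\emptyset$ tacitly; so it does not count against your strategy, but the assertion that the degenerate case can be closed should be withdrawn rather than claimed.
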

\begin{proof}
Let $C$ be a large scale separator between asymptotic disjoint subsets $A$ and $B$ of $X$. Let $X=X_{1}\bigcup X_{2}$ such that $X_{1}$ and $X_{2}$ satisfy property ii) of \ref{sep}. Without loss of generality one can assume that $A\subseteq X_{1}$ and $B\subseteq X_{2}$. Let $r$ be a positive real number. Let $L_{1}$ be the set of all $x\in X_{1}$ such that there exists an $r$-chain $x_{0},...,x_{n}$ joining $A$ and $B$ and $x=x_{i}$ for some $i\in \{0,...,n\}$ and $x_{i+1}\in X_{2}$. Similarly, let $L_{2}$ be the set of all $y\in X_{2}$ such that there exists an $r$-chain $y_{0},...,y_{m}$ joining $A$ and $B$ and $y=y_{i}$ for some $i\in \{1,...,n\}$ and $y_{i-1}\in X_{1}$. We have $d_{H}(L_{1},L_{2})\leq r$. By ii) of \ref{sep} $d_{H}(L_{1},L)\leq s$ for some $L\subseteq C$ and some $s>0$. Therefore each $r$-chain joining $A$ and $B$ meets $\textbf{B}(C,s)$.
\end{proof}
\begin{proposition}\label{cts2}
Let $(X,d)$ be a proper metric space. Then each asymptotic separator is an asymptotic cut.
\end{proposition}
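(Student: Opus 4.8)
The plan is to verify the two defining conditions of an asymptotic cut separately, treating condition (i) as a warm-up and concentrating all effort on the chain condition (ii). For (i): since $\nu C$ separates $\nu A$ from $\nu B$ in $\nu X$, we have $\nu A\cap\nu C=\emptyset=\nu B\cap\nu C$, so Corollary \ref{11setare} immediately gives that $C$ is asymptotically disjoint from both $A$ and $B$. The whole difficulty is therefore condition (ii): for each $r>0$ producing an $s>0$ so that every $r$-chain from $A$ to $B$ meets $\mathbf{B}(C,s)$.

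My first instinct---take a Urysohn function $g\colon\nu X\to[0,1]$ with $g=0$ on $\nu A$ and $g=1$ on $\nu B$, extend it by Tietze to $G\colon hX\to[0,1]$, restrict to a Higson function $f=G|_X$, and argue that an $r$-chain running from $\{f\approx 0\}$ to $\{f\approx 1\}$ must by slow oscillation pass through $\{f\approx\tfrac12\}$---fails as stated, because the level set $\{f\approx\tfrac12\}$ of a carelessly chosen $g$ can wander arbitrarily far from $C$ (think of $\mathbb{R}^2$ with $C$ the $y$-axis and a $g$ whose transition is smeared over a cone around the vertical direction). The fix, and the heart of the argument, is to build $f$ adapted to the part of the corona lying \emph{far from} $C$. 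Concretely I set $F=\bigcap_{s>0}\nu(X\setminus\mathbf{B}(C,s))$ and claim $F\cap\nu C=\emptyset$. This is where properness and the Higson condition do the work: the functions $\chi_m=\max\bigl(0,\,1-\tfrac{1}{m}d(\cdot,C)\bigr)$ are $\tfrac1m$-Lipschitz, hence Higson, and their continuous extensions satisfy $\widetilde\chi_m\equiv 1$ on $\nu C$ while $\widetilde\chi_m\equiv 0$ on $\overline{X\setminus\mathbf{B}(C,m)}$; since any $\xi\in F$ lies in the latter closure for every $m$, we get $\widetilde\chi_m(\xi)=0$ and so $\xi\notin\nu C$. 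Thus $F\subseteq\nu X\setminus\nu C=U\sqcup V$, and $F$ is compact.

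Next I would locate $\nu A$ and $\nu B$ inside $F$. Using that $A$ is asymptotically disjoint from $C$, the set $A\cap\mathbf{B}(C,s)$ is bounded for each $s$ (an unbounded such intersection would produce unbounded $L\subseteq A$ and $L'\subseteq C$ with $d_H(L,L')\le s$, contradicting asymptotic disjointness), whence $\nu A=\nu(A\setminus\mathbf{B}(C,s))\subseteq\nu(X\setminus\mathbf{B}(C,s))$ for all $s$, so $\nu A\subseteq F$; combined with $\nu A\subseteq U$ this gives $\nu A\subseteq F_U:=F\cap U$, and symmetrically $\nu B\subseteq F_V:=F\cap V$. The sets $F_U,F_V$ are disjoint compacta, so Urysohn on $\nu X$ followed by Tietze to $hX$ yields a Higson function $f=G|_X$ with $f=0$ on $F_U$ and $f=1$ on $F_V$. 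The payoff is clean control of the transition set: if $\{x:\tfrac13\le f(x)\le\tfrac23\}$ contained points $x_n\to\infty$ with $d(x_n,C)\to\infty$, their corona limit would lie in $F=F_U\sqcup F_V$, where $f\in\{0,1\}$, a contradiction; hence there is an $s_0$ (and a bounded exceptional ball) with $\{x:\tfrac13\le f(x)\le\tfrac23\}$ lying, outside that ball, inside $\mathbf{B}(C,s_0)$.

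With this $f$ in hand the chain argument is routine. Fixing $r$, the Higson property furnishes a ball $\mathbf{B}(x_0,R)$ outside which any two $r$-close points differ in $f$ by less than $\tfrac16$, and outside a bounded set $f<\tfrac13$ on $A$ and $f>\tfrac23$ on $B$ (because $f\to 0$ along $\nu A\subseteq F_U$ and $f\to1$ along $\nu B\subseteq F_V$). Enlarge all these bounded sets to a single ball $\mathcal{B}^{\ast}$. Given an $r$-chain from $A$ to $B$: if it ever enters $\mathcal{B}^{\ast}$ it meets $\mathbf{B}(C,s_2)$, where $s_2=\sup_{x\in\mathcal{B}^{\ast}}d(x,C)<\infty$ since $\mathcal{B}^{\ast}$ is bounded and $C\neq\emptyset$; otherwise the chain lies entirely outside $\mathcal{B}^{\ast}$, its successive $f$-values move by less than $\tfrac16$ and climb from below $\tfrac13$ to above $\tfrac23$, so some vertex has $f\in[\tfrac13,\tfrac23]$ and therefore lies in $\mathbf{B}(C,s_0)$. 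Taking $s=\max(s_0,s_2)$ settles condition (ii). The step I expect to be the real obstacle is the construction and analysis of the adapted function---specifically identifying $F$ and proving $F\cap\nu C=\emptyset$ via the Lipschitz bumps $\chi_m$---since, as the cone example shows, an arbitrary separating Higson function does not witness the cut; everything after that is slow-oscillation bookkeeping.
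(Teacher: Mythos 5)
The paper itself offers no argument for this proposition (it simply cites \cite{asdim}, Proposition 27), so your attempt stands on its own; unfortunately the step you yourself call the heart of the argument is wrong. A $\frac{1}{m}$-Lipschitz function need not be a Higson function: the Higson condition requires the variation over each fixed entourage to \emph{tend to zero} at infinity, not merely to be uniformly small. Take $X=\mathbb{R}^{2}$ and $C$ the $x$-axis: $\chi_{m}$ equals $1$ at $(t,0)$ and $\tfrac12$ at $(t,m/2)$ for every $t$, so its oscillation over pairs at distance $m/2$ never decays, and $\chi_{m}$ has no continuous extension to $hX$ at all. Worse, the claim it was meant to prove is false in this example: for every $s>0$ the sets $C$ and $X\setminus\mathbf{B}(C,s)$ are \emph{not} asymptotically disjoint (the rays $\{(t,0):t\geq0\}$ and $\{(t,s):t\geq0\}$ are unbounded and at Hausdorff distance $s$), so by Corollary \ref{11setare} each compact set $\nu C\cap\nu(X\setminus\mathbf{B}(C,s))$ is nonempty; these sets decrease as $s$ grows, so by compactness of $\nu X$ their intersection $\nu C\cap F$ is nonempty. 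Hence $F$ does not decompose as $F_{U}\sqcup F_{V}$, and the later step ``corona limits of points escaping to infinity far from $C$ land where $f\in\{0,1\}$'' collapses: such a limit may land in $F\cap\nu C$, where your Urysohn function is unconstrained --- exactly your own ``cone'' pathology reappearing. A warning sign: had your construction of an adapted $f$ worked for an arbitrary asymptotic separator, it would essentially show that every asymptotic separator is a large scale separator, hence $\operatorname{asInd}=\operatorname{lsInd}$ for all proper metric spaces, a statement the paper obtains only under $r$-convexity (Theorem \ref{asl}).

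The argument can be repaired by running it as a contradiction and replacing the universal set $F$ by the corona of the offending chains. If $C$ is not an asymptotic cut, there are $r>0$ and, for each $n$, an $r$-chain $Z_{n}$ joining $A$ and $B$ with $Z_{n}\cap\mathbf{B}(C,n)=\emptyset$. The union $Z=\bigcup_{n}Z_{n}$ \emph{is} asymptotically disjoint from $C$, because each $Z_{n}$ is a finite set: any point of $Z$ at distance less than $m$ from $C$ lies in the finite set $Z_{1}\cup\dots\cup Z_{\lceil m\rceil}$, so no unbounded subset of $Z$ stays at finite Hausdorff distance from a subset of $C$. Corollary \ref{11setare} now gives $\nu Z\cap\nu C=\emptyset$, hence $\nu Z\subseteq U\sqcup V$ splits into the disjoint compacta $\nu Z\cap U$ and $\nu Z\cap V$. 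From here your own machinery applies verbatim with $F_{U}:=(\nu Z\cap U)\cup\nu A$ and $F_{V}:=(\nu Z\cap V)\cup\nu B$: build $f$ by Urysohn--Tietze; since every corona point of $Z$, of the initial vertices (in $A$) and of the terminal vertices (in $B$) lies where the extension is $0$ or $1$, outside some bounded set every vertex of every $Z_{n}$ satisfies $f<\tfrac13$ or $f>\tfrac23$, initial vertices satisfy $f<\tfrac13$, terminal ones $f>\tfrac23$, and by the Higson property consecutive vertices differ by less than $\tfrac16$; for large $n$ the chain $Z_{n}$ avoids that bounded set (it avoids $\mathbf{B}(C,n)$, and one may assume $C\neq\emptyset$, the case $C=\emptyset$ being degenerate), so walking along $Z_{n}$ from $A$ to $B$ is impossible. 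This chain-based contradiction is the content of the result the paper cites.
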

\begin{proof}
See \cite{asdim} Proposition 27.
\end{proof}
Let $(X,d)$ be a metric space. Clearly \ref{cts1} and \ref{cts2} shows that $\operatorname{asDg}X\leq \operatorname{lsInd}_{\lambda_{d}}X$, and $\operatorname{asDg}X\leq \operatorname{asInd}X$ if $X$ is proper. The inverses of \ref{cts1} and \ref{cts2} are not true.
\begin{example}
Let $X=\bigcup_{n\in \mathbb{N}}(2^{n}\times [0,+\infty))$ and let $d$ be the induced metric from $\mathbb{R}^{2}$ on $X$. Suppose that $A=\{(2^{n},0)\mid n\in \mathbb{N}\}$ and $B=2\times [0,+\infty)$. It is straightforward to show that $A$ and $B$ are asymptotically disjoint and $C=\{(2,0)\}$ is an asymptotic cut between them which is neither a large scale separator nor an asymptotic separator.
\end{example}
Let $(X,d)$ be a metric space and let $x\in X$. For a positive real number $r$, we denote by $[x]_{r}$ the set of all $y\in X$ such that there is an $r$-chain joining $\{x\}$ and $\{y\}$.
\begin{definition}
For a positive number $r$, we say a metric space $X$ is $r$-\emph{connected} if $X=[x]_{r}$ for some $x\in X$. Equivalently we can call a metric space $X$, $r$-connected if each two points in $X$ are connected by an $r$-chain.
\end{definition}
\begin{definition}
Let $X$ be an $r$-connected metric space for some $r>0$. For each $x,y\in X$, let $d_{r}(x,y)$ be the minimum $n\in \mathbb{N}\bigcup \{0\}$ such that there exists an $r$-chain $x=x_{0},...,x_{n}=y$ in $X$, joining $\{x\}$ and $\{y\}$. We call $d_{r}$ the $r$-\emph{metric}.\\
\end{definition}
\begin{definition}
We call an $r$-connected metric space $(X,d)$, $r$-\emph{convex} if $x,y\in X$ and $d(x,y)\geq r$, imply $d_{r}(x,y)\leq d(x,y)$.
\end{definition}
Note that in an $r$-connected metric space $(X,d)$ if $d(x,y)\leq r$ then $d_{r}(x,y)\leq 1$.
\begin{example}
Each geodesic metric space is $2$-convex. Every finitely generated group with the word metric is $2$-convex.
\end{example}
\begin{proposition}\label{cts3}
Let $(X,d)$ be an $r$-convex metric space for some $r>0$. Then each asymptotic cut in $X$ is a large scale separator in $(X,\lambda_{d})$.
\end{proposition}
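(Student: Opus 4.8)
The plan is to use the single radius $r$ witnessing $r$-convexity as the scale at which we slice $X$, and to convert the purely metric hypothesis of an asymptotic cut into a combinatorial separation by $r$-chains. Let $C$ be an asymptotic cut between the asymptotically disjoint sets $A$ and $B$. Applying property ii) of the asymptotic cut to this particular $r$, I obtain $s>0$ such that every $r$-chain joining $A$ and $B$ meets $\textbf{B}(C,s)$; write $N=\textbf{B}(C,s)$. I would then set $P=\{x\in X\setminus N: x \text{ is joined to } A \text{ by an } r\text{-chain lying in } X\setminus N\}$ and define $X_{2}=N\cup P$ and $X_{1}=X\setminus P$. Since $P\subseteq X\setminus N$ we have $X_{1}\cup X_{2}=X$ and $X_{1}\cap X_{2}=N$. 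The cut property forces $A\subseteq X_{2}$ and $B\subseteq X_{1}$: a point of $B\setminus N$ joined to $A$ inside $X\setminus N$ would give an $r$-chain from $A$ to $B$ avoiding $N$, which is impossible.

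The engine of the argument is the following consequence of $r$-convexity, which I would isolate first: if $d(x,y)\le M$ then there is an $r$-chain $x=x_{0},\dots,x_{k}=y$ with $k\le\max(M,1)$, and since each step has length $<r$ every vertex of this chain lies in $\textbf{B}(x,r\max(M,1))$. Thus metric closeness is upgraded to closeness by a short, spatially localized $r$-chain, and this is the only place $r$-convexity enters.

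Next I would verify the two asymptotic-disjointness requirements of Definition \ref{sep}. Suppose, for contradiction, that $X_{2}$ is not asymptotically disjoint from $B$, so there are unbounded $L\subseteq X_{2}$ and $L'\subseteq B$ with $L\,\lambda_{d}\,L'$, say $d_{H}(L,L')\le M$. For $x\in L\cap P$ pick $b\in B$ with $d(x,b)\le M$ and take the localized $r$-chain from $x$ to $b$; were it to avoid $N$, concatenating it with the $A$-to-$x$ chain inside $X\setminus N$ would join $A$ to $B$ in $X\setminus N$, contradicting the choice of $s$. Hence the chain meets $N$ and, by localization, $x\in\textbf{B}(C,s+r\max(M,1))$. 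Together with $L\cap N\subseteq\textbf{B}(C,s)$ this gives $L\subseteq\textbf{B}(C,K)$ for $K=s+r\max(M,1)$, so $L$ is asymptotically alike to an unbounded subset of $C$; this contradicts $C$ being asymptotically disjoint from $B$ (property i) of the asymptotic cut). That $X_{1}$ is asymptotically disjoint from $A$ follows by the symmetric argument, using membership in the complement of $P$ in place of membership in $P$.

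Finally, for the crossing condition, given unbounded $L_{1}\subseteq X_{1}$ and $L_{2}\subseteq X_{2}$ with $d_{H}(L_{1},L_{2})\le M$, the same localization shows that each $x\in L_{1}$ lies in $\textbf{B}(C,K)$: if its partner $y\in L_{2}$ is in $N$ this is immediate, and if $y\in P$ then a localized chain from $x$ to $y$ avoiding $N$ would place $x$ in $P$, contradicting $x\in X_{1}\setminus N$. Hence $L_{1}\subseteq\textbf{B}(C,K)$ and $L=\textbf{B}(L_{1},K)\cap C$ satisfies $d_{H}(L_{1},L)\le K$, i.e. $L\,\lambda_{d}\,L_{1}$ with $L\subseteq C$, which is exactly what Definition \ref{sep} ii) requires; condition i) holds because $C$ is an asymptotic cut. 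I expect the main obstacle to be the localization step: it is precisely $r$-convexity that keeps the connecting chain inside a bounded neighborhood of its endpoint, so that ``meeting $N$'' pins the point near $C$; without this control the connecting $r$-chain could wander arbitrarily far and the argument would collapse. I would therefore take care to state the localization bound uniformly in $M$ and to check that the bounded overlap $N$, and the intersections of $A$ and $B$ with $N$, do not affect any of the asymptotic-disjointness conclusions.
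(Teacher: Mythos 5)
Your proof is correct and takes essentially the same route as the paper's: you decompose $X$ by $r$-chain reachability from $A$ inside $X\setminus\textbf{B}(C,s)$ and use $r$-convexity to upgrade Hausdorff-closeness to short, spatially localized $r$-chains, so that any unbounded asymptotically alike pair straddling the decomposition is trapped in a bounded neighborhood of $C$ (this is exactly the paper's $X_{1}=\bigcup_{a\in A}[a]_{r}^{Y}$ argument). The differences are cosmetic and if anything tidy things up: the paper separately treats the case where $A$ or $B$ is bounded and discards $A\cup B$ inside $\textbf{B}(C,s)$, whereas your placement of $N$ in both pieces handles this uniformly and makes the asymptotic-disjointness clauses explicit; just enlarge your constant to $K=s+M+r\max(M,1)$ so the case of a partner $y\in N$ is covered when $r<1$.
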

\begin{proof}
Let $A$ and $B$ be two asymptotic disjoint subsets of $X$, and let $C$ be an asymptotic cut between them. Suppose that $A$ is bounded. Let $X_{1}=A\bigcup C$ and let $X_{2}=X\setminus X_{1}$. Since $C$ is asymptotically disjoint from $B$ it is easy to check that $X_{1}$ and $X_{2}$ satisfy the property ii) of \ref{sep}. Similar argument holds whenever $B$ is bounded. Now, suppose that $A$ and $B$ are unbounded. There is an $s>0$ such that each $r$-chain joining $A$ and $B$ meets $\textbf{B}(C,s)$. Let $Y=X\setminus \textbf{B}(C,s)$. Since $C$ is asymptotically disjoint form both $A$ and $B$, $\textbf{B}(C,s)$ contains only bounded subsets of $A\bigcup B$. So without loss of generality, we can assume that $A\bigcup B\subseteq Y$. By $[x]_{r}^{Y}$ we mean the set of all points $y\in Y$ such that there exists an $r$-chain joining $\{x\}$ and $\{y\}$ in $Y$. Let $X_{1}=\bigcup_{a\in A}[a]_{r}^{Y}$ and let $X_{2}=X\setminus X_{1}$. Suppose that $L_{1}\subseteq X_{1}$ and $L_{2}\subseteq X_{2}$ are two unbounded and asymptotically alike subsets of $X$. So $d_{H}(L_{1},L_{2})<m$, for some $m\in \mathbb{N}$. Let $x\in L_{1}$. There exists some $y\in L_{2}$ such that $d(x,y)<m$. So $d_{r}(x,y)\leq m$, and it shows that there is an $r$-chain $x=x_{0},...,x_{n}=y$ joining $\{x\}$ and $\{y\}$ in $X$ with $n\leq m$. Since $y\notin X_{1}$, there is some $i\in \{0,...,n\}$ such that $x_{i}\in \textbf{B}(C,s)$. Thus $d(x,C)\leq \sum_{j=0}^{i-1}d(x_{j},x_{j+1})+d(x_{i},C)\leq ir+s\leq mr+s $. It shows that $L_{1}\subseteq \textbf{B}(C,mr+s)$. Let $L=\textbf{B}(L_{1},mr+s)\bigcap C$. Then $L$ is a subset of $C$ which is asymptotically alike to $L_{1}$. Since each $r$-chain joining $A$ and $B$ meets $\textbf{B}(C,s)$, $A\subseteq X_{1}$ and $B\subseteq X_{2}$. Therefore $C$ is a large scale separator between $A$ and $B$.
\end{proof}
\begin{theorem}\label{asl}
 Assume that $(X,d)$ is an $r$-convex metric space, for some $r>0$. Then $\operatorname{asDg}X=\operatorname{lsInd_{\lambda_{d}}}X$, and $\operatorname{asDg}X=\operatorname{asInd}X=\operatorname{lsInd_{\lambda_{d}}}X$, if $X$ is proper.
\end{theorem}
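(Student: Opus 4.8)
The plan is to isolate the single nontrivial inequality $\operatorname{lsInd}_{\lambda_{d}}X\le\operatorname{asDg}X$ and then read off every claimed equality from the bounds already assembled in the text. For an arbitrary metric space, Proposition \ref{cts1} shows each large scale separator is an asymptotic cut; since the inductive definitions of $\operatorname{lsInd}_{\lambda_{d}}$ and $\operatorname{asDg}$ are word-for-word identical apart from the separator notion, a routine induction on $\operatorname{lsInd}_{\lambda_{d}}X$ gives $\operatorname{asDg}X\le\operatorname{lsInd}_{\lambda_{d}}X$ with no convexity needed (at each level the separator $C$ is a cut and the hypothesis is applied to $C$). When $X$ is proper, Proposition \ref{cts2} gives $\operatorname{asDg}X\le\operatorname{asInd}X$ by the same induction, and the remark following Proposition \ref{setare} gives $\operatorname{asInd}X\le\operatorname{lsInd}_{\lambda_{d}}X$. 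Thus once the reverse inequality is in hand, the first chain collapses to $\operatorname{asDg}X=\operatorname{lsInd}_{\lambda_{d}}X$, and in the proper case the sandwich $\operatorname{asDg}X\le\operatorname{asInd}X\le\operatorname{lsInd}_{\lambda_{d}}X=\operatorname{asDg}X$ forces all three to coincide.

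For the remaining inequality I would induct on $n=\operatorname{asDg}X$. The base case is immediate, since $\operatorname{asDg}X=-1$ and $\operatorname{lsInd}_{\lambda_{d}}X=-1$ both say exactly that $X$ is bounded. For the inductive step, take asymptotically disjoint $A,B\subseteq X$; because $\operatorname{asDg}X\le n$ there is an asymptotic cut $C$ between them with $\operatorname{asDg}C\le n-1$. Here the $r$-convexity hypothesis enters through Proposition \ref{cts3}, which upgrades $C$ from an asymptotic cut into a genuine large scale separator between $A$ and $B$, so that conditions i) and ii) of Definition \ref{sep} hold. To conclude $\operatorname{lsInd}_{\lambda_{d}}X\le n$ it then remains only to bound $\operatorname{lsInd}_{\lambda_{d_{C}}}C\le n-1$, which one wants to extract from $\operatorname{asDg}C\le n-1$ by applying the inductive hypothesis to $C$.

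The main obstacle is precisely this recursion. The inductive hypothesis is available only for $r$-convex spaces, whereas an asymptotic cut $C$ taken with the restricted metric $d_{C}$ has no reason to be $r'$-convex for any $r'$: the $r$-chains of $X$ that $C$ must meet may leave $C$ altogether, so the estimate $d_{r}(x,y)\le d(x,y)$ that drives Proposition \ref{cts3} need not survive passage to the subspace, and the naive strong induction does not close. The way I would attack this is to strengthen the inductive statement so it is inherited by the separators it produces, namely to show that between any two asymptotically disjoint sets the cut $C$ can be chosen so that $(C,d_{C})$ itself carries a convexity estimate sufficient to rerun Proposition \ref{cts3} one level down. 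A natural first attempt is to replace $C$ by a uniformly bounded thickening $\textbf{B}(C,t)\cap X$; the inclusion $C\hookrightarrow\textbf{B}(C,t)\cap X$ is a coarse equivalence, so neither $\operatorname{asDg}$ nor $\operatorname{lsInd}_{\lambda_{d}}$ changes and the thickening is again a large scale separator between $A$ and $B$, but arranging that such a thickening is genuinely $r$-path connected with the ambient chain structure is exactly the delicate point. Establishing that a convexity-respecting choice of separator is always possible is the technical heart of the argument; granting it, the induction runs verbatim and the three dimensions agree.
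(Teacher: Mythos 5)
Your first paragraph reproduces, in expanded form, exactly what the paper does: the paper's entire proof of Theorem \ref{asl} is the single sentence that it is ``a straightforward consequence of propositions \ref{setare}, \ref{cts1}, \ref{cts2} and \ref{cts3}'', i.e.\ precisely your assembly of $\operatorname{asDg}X\leq\operatorname{lsInd}_{\lambda_{d}}X$ (via \ref{cts1}), $\operatorname{asDg}X\leq\operatorname{asInd}X$ (via \ref{cts2}), $\operatorname{asInd}X\leq\operatorname{lsInd}_{\lambda_{d}}X$ (via the remark after \ref{setare}), and the reverse inequality from \ref{cts3}. So up to that point you have matched the paper's approach exactly.

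The obstacle you then isolate is genuine, and the important thing for you to know is that the paper does not resolve it either. Both dimensions recurse into the subspace $C$ with its \emph{intrinsic} structure ($\lambda_{C}$, respectively $d_{C}$-chains), Proposition \ref{cts3} is available only for $r$-convex spaces, and an asymptotic cut in an $r$-convex space is an essentially arbitrary subspace, so the inductive hypothesis cannot legitimately be applied to it. With the cited tools the induction closes only through dimension one: if $\operatorname{asDg}X\leq 1$, the cut $C$ has $\operatorname{asDg}C\leq 0$, and Theorem \ref{zero} (which holds for all metric spaces, but appears \emph{after} Theorem \ref{asl} and is not cited in its proof) yields $\operatorname{lsInd}_{\lambda_{d_{C}}}C\leq 0$; from $n=2$ on, one needs ``$\operatorname{asDg}C\leq n-1$ implies $\operatorname{lsInd}_{\lambda_{d_{C}}}C\leq n-1$'' for non-convex $C$, which is established nowhere in the paper. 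Your proposed repair (choosing or thickening cuts so that convexity survives) is the right kind of idea but, as you acknowledge, is not proved, and it is not clear it can be: a bounded thickening $\textbf{B}(C,t)$ of a wild cut need not even be $r$-chain connected, let alone $r$-convex. So as a proof your proposal is incomplete, but the incompleteness is inherited from, and in fact exposes, the gap hidden in the paper's one-line proof; a complete argument would need either a lemma that asymptotic cuts in $r$-convex spaces can be chosen coarsely $r$-convex, or an extension of Theorem \ref{zero} to higher dimensions valid for arbitrary metric spaces.
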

\begin{proof}
It is a straightforward consequence of propositions \ref{setare}, \ref{cts1}, \ref{cts2} and \ref{cts3}.
\end{proof}
\begin{corollary}
Let $X$ be a proper metric space. If $X$ is coarsely equivalent to a geodesic metric space, then $\operatorname{asDg}X=\operatorname{asInd}X=\operatorname{lsInd_{\lambda_{d}}}X$.
\end{corollary}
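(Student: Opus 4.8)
The plan is to pull the whole statement back to Theorem \ref{asl} for a geodesic model of $X$, treating $\operatorname{asDg}$ and $\operatorname{lsInd}$ (both coarse invariants) on an equal footing and recovering $\operatorname{asInd}$ by a sandwich argument. The reason for separating the last invariant is that $\operatorname{asInd}$ is only defined for proper spaces, so it cannot simply be transported across the coarse equivalence.

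First I would fix a geodesic metric space $(Y,d')$ coarsely equivalent to $(X,d)$. By the Example recorded above, every geodesic metric space is $2$-convex, so $Y$ is $2$-convex and the first (non-proper) half of Theorem \ref{asl} applies to $Y$, giving
\[
\operatorname{asDg}Y=\operatorname{lsInd}_{\lambda_{d'}}Y .
\]
Next I would invoke coarse invariance of the two sides. The asymptotic dimensiongrad is invariant under coarse equivalences, as recalled in the text, so $\operatorname{asDg}X=\operatorname{asDg}Y$. For the large scale inductive dimension, a coarse equivalence between $(X,d)$ and $(Y,d')$ is exactly an asymptotic equivalence between the associated AS.R.\ spaces $(X,\lambda_{d})$ and $(Y,\lambda_{d'})$; hence the invariance theorem for $\operatorname{lsInd}$ (the theorem asserting $\operatorname{lsInd}_{\lambda}X=\operatorname{lsInd}_{\lambda'}Y$ for asymptotically equivalent spaces) yields $\operatorname{lsInd}_{\lambda_{d}}X=\operatorname{lsInd}_{\lambda_{d'}}Y$. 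Chaining these three identities gives $\operatorname{asDg}X=\operatorname{lsInd}_{\lambda_{d}}X$, with no properness hypothesis used so far.

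Finally, using that $X$ is proper I would call on the inequalities already collected in the paper: Proposition \ref{cts1} gives $\operatorname{asDg}X\leq \operatorname{lsInd}_{\lambda_{d}}X$, Proposition \ref{cts2} gives $\operatorname{asDg}X\leq \operatorname{asInd}X$, and Proposition \ref{setare} gives $\operatorname{asInd}X\leq \operatorname{lsInd}_{\lambda_{d}}X$. Together these read
\[
\operatorname{asDg}X\leq \operatorname{asInd}X\leq \operatorname{lsInd}_{\lambda_{d}}X .
\]
Since the two ends are now equal, the middle term is squeezed to the same value; this works in the finite case and, reading the left inequality as $\infty\leq\operatorname{asInd}X$, also in the infinite case. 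This establishes $\operatorname{asDg}X=\operatorname{asInd}X=\operatorname{lsInd}_{\lambda_{d}}X$.

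I expect the only genuine subtlety to be the temptation to apply the proper half of Theorem \ref{asl} directly to $Y$ and then carry $\operatorname{asInd}$ across the coarse equivalence. This route is blocked because $Y$ need not be proper and $\operatorname{asInd}$ is only defined for proper spaces. The fix is exactly the separation above: extract from $Y$ only the coarse-invariant equality $\operatorname{asDg}Y=\operatorname{lsInd}_{\lambda_{d'}}Y$, and recover $\operatorname{asInd}X$ purely from the sandwich on the proper space $X$ itself.
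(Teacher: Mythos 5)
Your proof is correct, and it differs from the paper's in a way worth noting. The paper's own proof is one line: it asserts that all three quantities, including $\operatorname{asInd}$, are invariant under coarse equivalences, and then transports the full conclusion of Theorem \ref{asl} from the geodesic model $Y$ back to $X$. That route quietly requires either that $Y$ be proper (so that $\operatorname{asInd}Y$ is defined and the proper half of Theorem \ref{asl} applies to it) or an unproved coarse-invariance statement for $\operatorname{asInd}$ among proper spaces; neither point is addressed in the paper. You instead transport only the two invariants whose coarse invariance the paper actually establishes --- $\operatorname{asDg}$ (stated in Section 4) and $\operatorname{lsInd}$ (the invariance theorem in Section 3, via the equivalence between coarse equivalence of metrics and asymptotic equivalence of the associated AS.R.\ spaces) --- obtaining $\operatorname{asDg}X=\operatorname{lsInd}_{\lambda_{d}}X$ from the non-proper half of Theorem \ref{asl} applied to $Y$, and then recover $\operatorname{asInd}X$ by squeezing it between them with the inequalities $\operatorname{asDg}X\leq \operatorname{asInd}X\leq \operatorname{lsInd}_{\lambda_{d}}X$, both of which the paper proves for the proper space $X$ itself (Propositions \ref{cts2} and \ref{setare}). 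Your handling of the infinite case via the left inequality is also sound. The net effect is a proof that is slightly longer than the paper's but is fully self-contained within the paper's established results and closes the gap you correctly identified: $\operatorname{asInd}$ cannot be carried across a coarse equivalence whose target may fail to be proper.
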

\begin{proof}
Since $\operatorname{asDg}X$, $\operatorname{asInd}X$ and $\operatorname{lsInd_{\lambda_{d}}}X$ are invariant under coarse equivalences and since each geodesic metric space is $2$-convex, this corollary is a straightforward result of \ref{asl}.
\end{proof}
\begin{lemma}\label{hich}
Let $(X,d)$ be a metric space. If $\operatorname{asDg}X=0$, then $[x]_{r}$ is bounded for all $x\in X$ and $r>0$.
\end{lemma}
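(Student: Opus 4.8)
The plan is to argue by contraposition. Assuming $[x]_{r}$ is unbounded for some $x\in X$ and $r>0$, I will exhibit two asymptotically disjoint subsets $A$ and $B$ of $X$ admitting \emph{no} bounded asymptotic cut. Since $\operatorname{asDg}X=0$ means precisely that $X$ is unbounded and that every pair of asymptotically disjoint subsets has an asymptotic cut $C$ with $\operatorname{asDg}C\leq -1$ (that is, a bounded cut), producing such a pair shows $\operatorname{asDg}X\not\leq 0$; as $[x]_{r}$ unbounded forces $X$ unbounded, this yields $\operatorname{asDg}X\geq 1\neq 0$, the desired contradiction.

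The construction exploits that, although distances in $[x]_{r}$ need not be controlled by the $r$-metric, every point of $[x]_{r}$ is reachable from $x$ by an $r$-chain, and one can cut off the \emph{tail} of such a chain so that it stays far from $x$. Concretely, using that $[x]_{r}$ is unbounded, I would choose points $w_{m}\in [x]_{r}$ with radii $\rho_{m}:=d(x,w_{m})$ growing rapidly, say $\rho_{1}>10r$ and $\rho_{m+1}>10(\rho_{m}+r)$. For each $m$ fix an $r$-chain $x=c_{0},\dots,c_{l}=w_{m}$ inside $[x]_{r}$, let $p$ be the last index with $d(x,c_{p})\leq \rho_{m}/2$, and put $a_{m}:=c_{p+1}$ and $b_{m}:=w_{m}$. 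By the choice of $p$ the subchain $\sigma_{m}:=(c_{p+1},\dots,c_{l})$ lies entirely outside $\textbf{B}(x,\rho_{m}/2)$, while $d(x,a_{m})\in(\rho_{m}/2,\rho_{m}/2+r)$ and $d(x,b_{m})=\rho_{m}$. Finally set $A:=\{a_{m}:m\in\mathbb{N}\}$ and $B:=\{b_{m}:m\in\mathbb{N}\}$.

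Two things then have to be checked. First, $A$ and $B$ are asymptotically disjoint: the radii of the $a_{m}$ (near $\rho_{m}/2$) and of the $b_{m}$ (equal to $\rho_{m}$) are separated by gaps tending to infinity because the $\rho_{m}$ grow geometrically, so the inequality $d(u,v)\geq |d(x,u)-d(x,v)|$ gives $d(a_{m},B)\geq \rho_{m}/2-r\to\infty$ and likewise $d(b_{m},A)\to\infty$; hence any unbounded $L_{1}\subseteq A$ and $L_{2}\subseteq B$ have infinite Hausdorff distance. Second, no bounded $C$ is an asymptotic cut between $A$ and $B$: a bounded $C$ lies in some $\textbf{B}(x,M)$, so $\textbf{B}(C,s)\subseteq \textbf{B}(x,M+s)$ for every $s>0$; choosing $m$ with $\rho_{m}/2>M+s$, the $r$-chain $\sigma_{m}$ joins $a_{m}\in A$ to $b_{m}\in B$ and misses $\textbf{B}(C,s)$. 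Thus for the value $r$ no radius $s$ works in clause (ii) of the definition of an asymptotic cut, so $C$ fails to be a cut.

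The main obstacle, and the reason the tail trick is needed, is the mismatch between the metric $d$ and the $r$-metric on a \emph{general} metric space: one cannot simply split $[x]_{r}$ into alternating $r$-annuli, since large $r$-distance from $x$ need not entail large ordinary distance, and the $r$-connectivity of $[x]_{r}$ might route every chain through a neighbourhood of $x$ (as happens for a ``broom'' of ever longer spokes). Extracting the far tail $\sigma_{m}$ of a chain to a distant point $w_{m}$ sidesteps both issues: it produces, at each scale, an honest $r$-chain between $A$ and $B$ lying outside an arbitrarily large ball, which is exactly what defeats every bounded candidate cut. The remaining verifications (that $a_{m},b_{m}\in[x]_{r}$, that $A$ and $B$ are unbounded, and the Hausdorff-distance estimate) are routine.
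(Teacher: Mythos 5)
Your proof is correct and is essentially the same as the paper's: both arguments assume $[x]_{r}$ is unbounded, truncate $r$-chains from $x$ to ever more distant points of $[x]_{r}$ at their last exit from a large ball around $x$, collect the tail endpoints into two asymptotically disjoint sets $A$ and $B$, and observe that the tails are $r$-chains joining $A$ and $B$ which avoid arbitrarily large balls around $x$, so no bounded set can satisfy clause (ii) of the definition of an asymptotic cut. The differences are only organizational -- the paper secures asymptotic disjointness by an inductive choice of points with explicit pairwise distance conditions, whereas you use geometrically growing radii $\rho_{m}$ and the triangle inequality (note only that for indices $k<m$ your bound $d(a_{m},B)\geq \rho_{m}/2-r$ should be the slightly weaker $2\rho_{m}/5$, which affects nothing since it still tends to infinity).
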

\begin{proof}
Suppose that, contrary to our claim, there are $x\in X$ and $r>0$ such that $[x]_{r}$ is unbounded. Choose $a_{1}\in [x]_{r}$ such that $d(x,a_{1})>r+3$. Let $x=x_{0},...,x_{m}=a_{1}$ be an $r$-chain joining $\{x\}$ and $\{a_{1}\}$. Since $d(\textbf{B}(x,1),\textbf{B}(a_{1},1))>r$, there exists an $i\in \{1,..,m-1\}$ such that $x_{i}$ is not in $\textbf{B}(x,1)\bigcup \textbf{B}(a_{1},1)$. Let $j=\max \{i\mid x_{i}\notin \textbf{B}(x,1)\bigcup \textbf{B}(a_{1},1)\}$. Suppose that $b_{1}=x_{j}$. So $d(b_{1},a_{1})>1$ and the $r$-chain $b=x_{j},...,x_{m}=a_{1}$ does not intersect $\textbf{B}(x,1)$. Assume that $a_{n}$ and $b_{n}$ are chosen such that $d(a_{n},b_{i})\geq n$ and $d(a_{i},b_{n})\geq n$ for all $i\in \{1,..,n\}$ and there exists an $r$-chain from $a_{n}$ to $b_{n}$ such that it does not intersect $\textbf{B}(x,n)$. Let $x=a_{0}$. Choose $a_{n+1}\in [x]_{r}$ such that $d(a_{n+1},a_{i})>2n+r+3$ for all $i\in \{0,..,n\}$ and $d(a_{n+1},b_{i})>n+1$ for all $i\in \{1,..,n\}$. Let $x=y_{0},...,y_{l}=a_{n+1}$ be an $r$-chain joining $\{x\}$ and $\{a_{n+1}\}$. Since $d(\textbf{B}(x,n+1),\textbf{B}(a_{n+1},n+1))>r$ there is an $i\in \{1,..,l\}$ such that $x_{i} \notin \textbf{B}(x,n+1)\bigcup \textbf{B}(a_{n+1},n+1)$. Let $k=\max \{i\mid y_{i} \notin \textbf{B}(x,n+1)\bigcup \textbf{B}(a_{n+1},n+1)\}$ and $b_{n+1}=y_{k}$. So the $r$-chain $b_{n+1}=y_{k},...,y_{l}=a_{n+1}$ does not intersect $\textbf{B}(x.n+1)$. In addition, we have $d(b_{n+1},\textbf{B}(a_{n+1},n+1))<r$. Since $d(a_{n+1},a_{i})>2n+r+3$ for all $i\in\{1,..,n\}$ , $d(b_{n+1},a_{i})>n+1$ for all $i\in \{1,..,n+1\}$. Let $A=\{a_{n}\mid n\in \mathbb{N}\}$ and $B=\{b_{n}\mid n\in \mathbb{N}\}$. Clearly $A$ and $B$ are asymptotically disjoint. For each $n\in \mathbb{N}$ there is an $r$-chain joining $A$ and $B$ that does not intersect $\textbf{B}(x,n)$. Therefore no bounded subset of $X$ can be an asymptotic cut between $A$ and $B$. Thus $\operatorname{asDg}X>0$.
\end{proof}
The inverse of \ref{hich} is not true in general.
\begin{example}
Let $X=\bigcup_{n\in \mathbb{N}}2^{n}\times [-n,n]$ and let $d$ be the induced metric from $\mathbb{R}^{2}$ on $X$. One can easily show that $\operatorname{asDg}X=1$, although $[x]_{r}$ is bounded for each $x\in X$ and $r>0$.
\end{example}
In \cite{count}, Smith showed that each countable group has a proper left invariant metric which is unique up to coarse equivalence. It can be shown that a countable group $G$ has asymptotic dimension zero if and only if each finitely generated subgroup of $G$ is finite (\cite{count}). We can prove the inverse of \ref{hich} for all countably infinite groups.
\begin{proposition}
Let $G$ be a countably infinite group with a proper and left invariant metric $d$. Then $\operatorname{asDg}G=0$ if and only if for each $x\in X$ and $r>0$, $[x]_{r}$ is bounded.
\end{proposition}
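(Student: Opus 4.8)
The forward implication is already contained in Lemma \ref{hich}, which holds for arbitrary metric spaces: if $\operatorname{asDg}G=0$ then $[x]_{r}$ is bounded for every $x$ and every $r>0$. So the entire content of the proposition lies in the converse, and that is where I would exploit that $G$ is a countable group and that $d$ is left invariant.

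The plan for the converse is to translate the metric hypothesis into the algebraic statement that \emph{every finitely generated subgroup of $G$ is finite}. Fix a finitely generated subgroup $H$ and a finite symmetric generating set $S$ of $H$, let $e$ be the identity of $G$, and choose $r>\max_{s\in S}d(e,s)$, which is possible since $S$ is finite. Given any $h=s_{1}\cdots s_{n}\in H$ with $s_{i}\in S$, the sequence $e,\,s_{1},\,s_{1}s_{2},\dots,s_{1}\cdots s_{n}=h$ is an $r$-chain: by left invariance $d(s_{1}\cdots s_{j-1},\,s_{1}\cdots s_{j})=d(e,s_{j})<r$. Hence $H\subseteq[e]_{r}$. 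By hypothesis $[e]_{r}$ is bounded, and since $d$ is proper every bounded subset of $G$ is finite, so $H$ is finite.

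Having shown that every finitely generated subgroup of $G$ is finite, the characterization recalled from \cite{count} shows that $G$ has asymptotic dimension $0$. It then remains to pass from the asymptotic dimension to the asymptotic dimensiongrad. Since $G$ is countably infinite and $d$ is proper, $G$ is unbounded, so $\operatorname{asDg}G\geq 0$. Because $G$ is an unbounded proper metric space of finite asymptotic dimension, Proposition \ref{asdim} gives that $\operatorname{asInd}G$ equals the asymptotic dimension of $G$, namely $0$; combining this with the inequality $\operatorname{asDg}G\leq\operatorname{asInd}G$, valid since $G$ is proper, yields $\operatorname{asDg}G\leq 0$ and therefore $\operatorname{asDg}G=0$.

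The step I expect to be the main obstacle is exactly the passage from the purely metric hypothesis on the sets $[x]_{r}$ to the algebraic finiteness of finitely generated subgroups, and everything there hinges on left invariance: the identity $d(gx,gy)=d(x,y)$ is what converts a word in the generators into an $r$-chain, and, more conceptually, what makes $[e]_{r}$ a subgroup satisfying $[x]_{r}=x[e]_{r}$. Without left invariance the hypothesis on $[x]_{r}$ carries no algebraic information and the equivalence fails. I note that left invariance also furnishes a self-contained alternative to citing \cite{count}: one checks directly that $[e]_{\rho}$ is a subgroup whose left cosets form a uniformly bounded cover with any two distinct members at distance at least $\rho$, so letting $\rho$ grow witnesses that $G$ has asymptotic dimension $0$; the route through finite subgroups is preferable here only because it quotes the cited characterization.
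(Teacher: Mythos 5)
Your proof is correct and follows essentially the same route as the paper's: reduce the converse to showing every finitely generated subgroup lies in some bounded $[e]_{r}$, invoke the characterization from \cite{count} to get asymptotic dimension zero, then combine Proposition \ref{asdim} with $\operatorname{asDg}G\leq\operatorname{asInd}G$. You even make explicit two steps the paper glosses over (the left-invariance computation behind ``Clearly $H\subseteq[e]_{r}$'' and the use of properness to pass from bounded to finite), and your strict inequality $r>\max_{s\in S}d(e,s)$ quietly fixes a small slip in the paper, which takes $r=\max$ despite chains requiring distances strictly less than $r$.
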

\begin{proof}
The "only if" part is a consequence of \ref{hich}. To prove the converse, suppose that $F$ is a finite subset of $G$. Let $r=\max \{d(e,g)\mid g\in F\}$, where $e$ is the neutral element of $G$. Assume that $H=<F>$ is the subgroup of $G$ generated by $F$. Clearly $H\subseteq [e]_{r}$. So each finitely generated subgroup of $G$ is finite. Therefore the asymptotic dimension of $G$ is zero. By \ref{asdim}, $\operatorname{asInd}G=0$. Since $\operatorname{asDg}G\leq \operatorname{asInd}X$ and $G$ is unbounded, $\operatorname{asDg}G=0$.
\end{proof}
\begin{theorem}\label{zero}
Let $(X,d)$ be a metric space. Then $\operatorname{asDg}X=0$ if and only if $\operatorname{lsInd_{\lambda_{d}}}X=0$.
\end{theorem}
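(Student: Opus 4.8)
The plan is to prove the two implications separately, the backward one being immediate and the forward one carrying all the weight. For the implication $\operatorname{lsInd}_{\lambda_{d}}X=0\Rightarrow\operatorname{asDg}X=0$, recall that Proposition \ref{cts1} (every large scale separator is an asymptotic cut) yields the inequality $\operatorname{asDg}X\leq\operatorname{lsInd}_{\lambda_{d}}X$ already noted after \ref{cts2}. Hence $\operatorname{lsInd}_{\lambda_{d}}X=0$ forces $\operatorname{asDg}X\leq 0$, and since $\operatorname{lsInd}_{\lambda_{d}}X=0$ means in particular that $X$ is unbounded we have $\operatorname{asDg}X\neq-1$; the two together give $\operatorname{asDg}X=0$.

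For the converse I would first reformulate what must be produced. A subspace has large scale inductive dimension $-1$ exactly when it is bounded, so a large scale separator $C$ witnessing $\operatorname{lsInd}_{\lambda_{d}}X\leq 0$ is a \emph{bounded} one. But if $C$ is bounded then no subset $L\subseteq C$ can be asymptotically alike to an unbounded set, so the implication in (ii) of \ref{sep} can hold only vacuously; thus a bounded large scale separator between $A$ and $B$ is nothing but a decomposition $X=X_{1}\cup X_{2}$ into two \emph{mutually asymptotically disjoint} pieces, positioned around $A$ and $B$ as in \ref{sep} (and then $C=\emptyset$ serves). So the forward direction reduces to the following: assuming $\operatorname{asDg}X=0$, every pair of asymptotically disjoint $A,B$ can be separated by a partition of $X$ into two asymptotically diverging sets, i.e. sets $X_{1},X_{2}$ such that $\{x\in X_{1}:d(x,X_{2})\leq m\}$ is bounded for every $m>0$.

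To build such a partition I would work scale by scale with the $r$-chain components. Using Lemma \ref{hich} together with the full strength of $\operatorname{asDg}X=0$, the key step is to show that for each fixed $r$ the distinct $r$-components of $X$ are not only bounded but uniformly bounded, so that far out each $r$-component meets at most one of $A,B$. Granting this, after disposing of the cases where $A$ or $B$ is bounded as in \ref{cts3}, fix a basepoint $x_{0}$, scales $r_{k}\to\infty$ and rapidly increasing radii $\rho_{k}\to\infty$, and on the shell $\{\rho_{k}\leq d(x_{0},\cdot)<\rho_{k+1}\}$ assign a point to $X_{1}$ if its $r_{k}$-component meets $A$ and to $X_{2}$ otherwise. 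Inside a single shell the two sides are unions of distinct $r_{k}$-components and are therefore at distance $\geq r_{k}$ apart, so the separating gap tends to infinity; across consecutive shells the assignment is consistent because a single hop of length $\leq m<r_{k+1}$ keeps two points in the same $r_{k+1}$-component and ``meeting $A$'' is monotone in the scale, so a point of $X_{1}$ can be within distance $m$ of $X_{2}$ only inside a bounded region. This gives the required divergence of $X_{1}$ and $X_{2}$, while the bounded asymptotic cut supplied by $\operatorname{asDg}X=0$ keeps $B$ out of the $A$-side far out, placing $A$ and $B$ on opposite sides.

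The hard part will be the uniform-boundedness step. Lemma \ref{hich} alone only gives that each $[x]_{r}$ is bounded, and the example following it (with $\operatorname{asDg}=1$ yet all $[x]_{r}$ bounded) shows that pointwise boundedness is genuinely insufficient; the real content of $\operatorname{asDg}X=0$ must be extracted here. I would argue by contradiction: if for some $r$ the $r$-components had unbounded diameters, I would select components $P_{n}$ with $\operatorname{diam}P_{n}\geq n$ and, exploiting that a large-diameter component cannot sit inside a bounded ball, extract points $a_{n}\in A\cap P_{n}$ and $b_{n}\in B\cap P_{n}$ running off to infinity with $d(a_{n},b_{n})\to\infty$; the resulting asymptotically disjoint pair $A=\{a_{n}\}$, $B=\{b_{n}\}$ would be joined, inside each $P_{n}$, by an $r$-chain escaping every bounded set, so no bounded asymptotic cut could exist, contradicting $\operatorname{asDg}X=0$. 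The secondary difficulty is the interface bookkeeping between shells, which is what the uniform bound and the scale-monotonicity are designed to control; this is precisely the place where Proposition \ref{cts3} had to invoke $r$-convexity to turn a short metric hop into a short $r$-chain, a tool unavailable for general metric spaces, so here its role is played instead by the uniform boundedness of the $r$-components.
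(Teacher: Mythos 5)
Your reduction of the forward implication is right, and it matches what the paper implicitly does: a separator witnessing $\operatorname{lsInd}_{\lambda_{d}}X\leq 0$ must be bounded, so the task is exactly to split $X=X_{1}\cup X_{2}$ with $A\subseteq X_{1}$, $B\subseteq X_{2}$ and $X_{1},X_{2}$ mutually asymptotically disjoint. But both halves of your construction have genuine gaps. First, in the uniform-boundedness step, the claim that far-apart points $a_{n},b_{n}$ in a large-diameter component $P_{n}$ are ``joined, inside each $P_{n}$, by an $r$-chain escaping every bounded set'' is unjustified: a component of diameter $\geq n$ can be folded so that \emph{every} $r$-chain between the two points you picked passes through one fixed ball (picture a ``V'' whose two long arms meet near the basepoint; in a non-proper space infinitely many such components can all meet a single ball). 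To get escaping chains you must choose the pair along one escaping branch --- e.g.\ take the high endpoint $q$ of a connecting chain and the last chain point lying below height $N/4$, so that the terminal sub-chain stays outside $\textbf{B}(x_{0},N/4-r)$ --- and you must then run the recursive far-apart selection from the proof of Lemma \ref{hich} to make the constructed pair asymptotically disjoint; neither step is in your sketch. (The claim itself, that $\operatorname{asDg}X=0$ forces uniformly bounded $r$-components, i.e.\ $\operatorname{asdim}X=0$, is true and can be proved this way.)

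Second, and more seriously, the shell construction fails in the inward direction. Your consistency argument rules out a point of $X_{1}$ in shell $k$ sitting near a point of $X_{2}$ in shell $k+1$, but not the reverse: a point $y$ in shell $k-1$ with $[y]_{r_{k-1}}\cap A=\emptyset$ (hence $y\in X_{2}$) may lie within distance $m$ of a point $x$ in shell $k$ with $[x]_{r_{k}}\cap A\neq\emptyset$ (hence $x\in X_{1}$); monotonicity of ``meeting $A$'' in the scale gives no contradiction here, because $y$ was classified at the \emph{smaller} scale. Such straddling configurations can occur at infinitely many shell boundaries in a space with $\operatorname{asDg}X=0$ (a dust of uniformly bounded clusters sitting across the radii $\rho_{k}$), so your $X_{1}$ and $X_{2}$ need not be asymptotically disjoint, and ``rapidly increasing $\rho_{k}$'' alone cannot prevent it. The paper's proof avoids shells and basepoints entirely: it sets $A_{i}=\bigcup_{a\in A}[a]_{i}\setminus\bigcup_{b\in B}[b]_{i}$ and $X_{1}=\bigcup_{i}A_{i}$, classifying every point at all scales at once, and it proves asymptotic disjointness without any uniform boundedness, by a second application of $\operatorname{asDg}X=0$ (a bounded cut between $B$ and the set $L_{1}^{\prime}$ of $A$-roots of offending points) combined with Lemma \ref{hich}. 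If you keep your uniform-boundedness lemma, the cleanest repair is to replace the shells by this scale-union $X_{1}$: an offending pair $x\in X_{1}$, $y\in X_{2}$ with $d(x,y)<m$ then forces (using the paper's observation that such $x$ lies in $A_{i}$ for some $i<m$) the component $[y]_{m}$ to meet both $A$ and $B$, which by the uniform bound on diameters of $m$-components contradicts the asymptotic disjointness of $A$ and $B$ directly.
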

\begin{proof}
If  $\operatorname{lsInd_{\lambda_{d}}}X=0$, then $X$ is unbounded. Since $\operatorname{asDg}X\leq \operatorname{lsInd_{\lambda_{d}}}X$, $\operatorname{asDg}X=0$. To prove the converse, assume that $\operatorname{asDg}X=0$. Let $A$ and $B$ be two asymptotically disjoint subsets of $X$. Let $A_{i}=\bigcup_{a\in A}[a]_{i}\setminus \bigcup_{b\in B}[b]_{i}$ for $i\in \mathbb{N}$. Assume that $A_{0}=A$ and let $X_{1}=\bigcup_{i=0}^{\infty}A_{i}$. Let $X_{2}=X\setminus X_{1}$. Clearly $B\subseteq X_{2}$. We claim that $X_{1}$ and $X_{2}$ are asymptotically disjoint. On the contrary to our claim, suppose that there exist unbounded subsets $L_{1}\subseteq X_{1}$ and $L_{2}\subseteq X_{2}$ such that $d_{H}(L_{1},L_{2})<m$ for some $m\in \mathbb{N}$. Let $x\in L_{1}$. There exists some $y\in L_{2}$ such that $d(x,y)<m$. For $i\geq m$ and $d\in X$, $x\in [d]_{i}$ if and only if $y\in [d]_{i}$. Using this fact, it is straightforward to show that $x\notin A_{i}$ for all $i\geq m$. Thus $L_{1}\bigcap A_{i}=\emptyset$ for all $i\geq m$. So $L_{1}\subseteq \bigcup_{i=0}^{m-1}A_{i}$. Since $L_{1}$ is unbounded, there is some $j\in \{0,...,m-1\}$ such that $L_{1}\bigcap A_{j}$ is unbounded. Let $L_{1}^{\prime}=\{a\in A\mid [a]_{j}\bigcap L_{1}\neq \emptyset\}$. Suppose that $L_{1}^{\prime}\subseteq \textbf{B}(x,k)$ for some $k>0$ and $x\in X$. For $s=\max\{k,r\}$, $L_{1}\bigcap A_{j}\subseteq [x]_{s}$. Since $L_{1}\bigcap A_{j}$ is unbounded, $[x]_{s}$ is unbounded and it contradicts \ref{hich}. Thus $L_{1}^{\prime}$ is unbounded. Let $a\in L_{1}^{\prime}$. Choose $x\in L_{1}\bigcap [a]_{j}$. There is a $y\in L_{2}$ such that $d(x,y)<m$. Since $j< m$, $y\in [a]_{m}$. Since $y\notin A_{m}$, there is a $b\in B$ such that $y\in [b]_{m}$. Thus for each $a\in L_{1}^{\prime}$ there is a $b\in B$ such that there is an $m$-chain joining them. Since $L_{1}^{\prime}$ and $B$ are asymptotically disjoint and $\operatorname{asDg}X=0$, there is a point $z\in X$ such that it is an asymptotic cut between $L_{1}^{\prime}$ and $B$. So there exists an $n\in \mathbb{N}$ such that each $m$-chain joining $L_{1}^{\prime}$ and $B$ intersects $\textbf{B}(z,m)$. Therefore $[z]_{l}$ is unbounded for $l=\max\{n,m,j\}$, and it contradicts \ref{hich}.
\end{proof}

\end{document}